\documentclass{amsart}
\usepackage[T1]{fontenc}
\usepackage[utf8]{inputenc}

\usepackage[noadjust]{cite}
\usepackage{palatino}
\usepackage{hyperref}
\usepackage{amssymb,amsthm,units,mathabx}
\usepackage{color}
\usepackage[usenames,dvipsnames,svgnames,table]{xcolor}
\usepackage{mathtools,xparse,xspace}
\usepackage{rotating}
\usepackage{bm} 

\hypersetup{allbordercolors=red,linkcolor=red,urlcolor=blue,urlbordercolor=blue,colorlinks=false,pdfborder={0 0 1}} 

\usepackage{epsfig}  		
\usepackage{epic,eepic}       
\usepackage{tikz,pgf}
\usetikzlibrary{arrows}
\usepackage[abs]{overpic}

\textwidth = 6.0 in
\textheight = 8.87 in
\oddsidemargin = 0.25 in
\evensidemargin = 0.25 in
\topmargin = -0.25 in
\setlength{\parskip}{4pt}

\newtheorem{thm}{Theorem}[section]
\newtheorem{prop}[thm]{Proposition}
\newtheorem{lem}[thm]{Lemma}

\newtheorem{question}[thm]{Question}

\theoremstyle{definition}

\theoremstyle{remark}

\newtheorem{remark}[thm]{Remark}


\numberwithin{equation}{section}

\newcommand{\R}{\mathbb{R}}  
\newcommand{\bd}{\partial}  

\newcommand{\Mxi}{(M, \xi)} 
\newcommand{\LOSS}{\widehat{\mathcal{L}}} 
\newcommand{\HFhat}{\widehat{{HF}}} 
\newcommand{\HFK}{\widehat{\mathrm{HFK}}} 
\newcommand{\SFH}{SFH} 


\newcommand{\vects}[2]{\left(\begin{smallmatrix} #1 \\ #2 \end{smallmatrix}\right)} 

\begin{document}

\title{Contact Surgeries on the Legendrian Figure-Eight Knot}
\author{James Conway}
\address{University of California, Berkeley}
\email{conway@berkeley.edu}
\begin{abstract}
We show that all positive contact surgeries on every Legendrian figure-eight knot in $(S^3, \xi_{\rm{std}})$ result in an overtwisted contact structure. The proof uses convex surface theory and invariants from Heegaard Floer homology.
\end{abstract}
\maketitle

\section{Introduction}

Dehn surgery on knots has been a fruitful way to construct new contact structures on 3-manifolds, and in particular to try to construct new tight contact manifolds.  When the knot in question is a Legendrian knot (\textit{ie.\@} its tangent vectors lie in the contact planes), Dehn surgery with framing equal to the contact framing always results in an overtwisted overtwisted contact manifold.  The remaining framings break into two classes: those less than the contact framing, and those greater.  Surgeries with these framings give rise to negative and positive contact surgery, respectively.

In \cite{Wand}, Wand showed that given a tight contact manifold, the result of negative contact surgery on any Legendrian knot is a tight contact manifold.  Regarding positive contact surgery, much less is known; existing tightness results can be found in \cite{LS:hf, LS:hf1, LS:seifert2, LS:surgery, Conway:transverse, MT, Golla}.

Most of the results for positive contact surgery prove tightness using various flavours of Heegaard Floer homology.  In particular, the non-vanishing of the Heegaard Floer contact class shows that a contact manifold is tight, however its vanishing is not equivalent to a contact manifold being overtwisted.  Several of the above results give conditions under which contact $(+1)$-surgery (\textit{ie.\@} positive contact surgery with framing one more than the contact framing) has vanishing Heegaard Floer contact class.

The results that prove that a positive contact surgery in overtwisted are even fewer.  Lisca and Stipsicz showed in \cite{LS:hfnotes} that there exists a configuration in the front projection of a Legendrian knot that ensures contact $(+1)$-surgery on the knot is overtwisted.  This configuration is not present in the figure-eight knot under consideration in this paper (but it is present in the negative torus knots, for example).  In \cite{Conway:transverse}, the author used versions of the Bennequin inequality (an inequality of Legendrian knot invariants that holds in tight contact manifolds) to give general results for when positive contact surgery on Legendrian knots is overtwisted.

After the unknot and the trefoils, the figure-eight knot is next natural knot to study (contact surgeries on the others were understood by \cite{LS:hf,DGS:surgery} for the unknot, \cite{LS:hf1} for the right-handed trefoil, and \cite{LS:hfnotes,Conway:transverse} for the left-handed trefoil).  The classification of Legendrian figure-eight knots in $(S^3, \xi_{\rm{std}})$ was undertaken by Etnyre and Honda in \cite{EH:knots}, who proved that all such Legendrian knots are classified up to isotopy by their Thurston--Bennequin number ($tb$) and rotation class ($rot$), and that all such knots destabilise to a Legendrian knot with $tb = -3$ and $rot = 0$.  Lisca and Stipsicz showed in \cite{LS:hfnotes} that the result of contact $(+1)$-surgery on any Legendrian figure-eight knot has vanishing Heegaard Floer contact class; we answer the natural follow-up question:

\begin{thm}\label{maintheorem}
The results of all positive contact surgeries on any Legendrian figure-eight knot in $(S^3, \xi_{\rm{std}})$ are overtwisted.
\end{thm}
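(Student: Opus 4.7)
The plan has three stages: reducing general positive contact surgeries to contact $(+1)$-surgeries, handling stabilized Legendrian representatives, and handling the unique maximal-$tb$ representative of the figure-eight.

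\textbf{Stage 1 (Reduction).} Every positive rational contact surgery on a Legendrian knot $L$ admits a Ding--Geiges--Stipsicz decomposition into a contact $(+1)$-surgery on $L$ or on a stabilization of $L$, followed by contact $(-1)$-surgeries on Legendrian push-offs. Using the standard observation that an overtwisted disk disjoint from the subsequent surgery locus persists after $(-1)$-surgery (and that push-offs can be arranged disjoint from the overtwisted disk produced inside the surgery solid torus), it suffices to prove that contact $(+1)$-surgery on every Legendrian representative of the figure-eight knot is overtwisted.

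\textbf{Stage 2 (Stabilizations).} For a Legendrian figure-eight $L$ with $tb(L) \leq -4$, I would exploit the fact that $L$ destabilizes. In the standard neighborhood $N(L)$, each stabilization records itself as an extra pair of dividing curves on a convex torus parallel to $\partial N(L)$; after contact $(+1)$-surgery, the dividing slope on the glued-in solid torus combines with this extra structure to admit a bypass whose attachment produces an overtwisted disk. This is in the spirit of the author's methods from \cite{Conway:transverse} and should be carried out for every stabilization pattern $(k,l)$ with $k + l \geq 1$, giving overtwistedness in every stabilized case.

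\textbf{Stage 3 (Maximal representative).} The central new case is contact $(+1)$-surgery on $L_0$ with $tb = -3$, $rot = 0$, which yields topological $(-2)$-surgery on the figure-eight. Since \cite{LS:hfnotes} only gives vanishing of the contact class, I would combine (a) a classification of tight contact structures on this specific small Seifert fibered space, via convex surface theory on a carefully chosen Heegaard torus (or on the complement of a suitable incompressible surface), producing a finite list, with (b) a computation of Heegaard Floer $d$-invariants (or of a refined plus-version of the contact invariant) for the surgery contact structure, showing that its Floer-theoretic data match none of the tight candidates.

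\textbf{Main obstacle.} The hardest part is Stage 3: promoting vanishing of $c(\xi)$ to genuine overtwistedness. One has to either find an overtwisted disk explicitly by a delicate convex surface argument near the surgery torus, or complete a full tight-contact-structure classification on the surgered manifold and rule out each candidate by comparing Heegaard Floer data. Neither route is routine, and this is where I expect the main technical effort of the proof to be concentrated.
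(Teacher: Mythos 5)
Your proposal diverges from the paper's argument at its very first step, and the divergence creates a genuine gap. Stage 1 assumes that once the intermediate contact $(+1)$-surgery is overtwisted, the subsequent contact $(-1)$-surgeries on push-offs preserve overtwistedness. That is not automatic: Legendrian surgery on a non-loose knot in an overtwisted manifold can produce a tight one, so you must exhibit an overtwisted disc disjoint from the later surgery curves. Your Stage 3 produces no disc at all (it is a contradiction argument via Floer homology on the closed surgered manifold), and your parenthetical that the disc lies ``inside the surgery solid torus'' cannot be right, since the glued-in solid torus carries a tight contact structure by the definition of contact surgery. Moreover, for coefficients $0<r<1$ the Ding--Geiges--Stipsicz decomposition requires several consecutive $(+1)$-surgeries, so the reduction to a single $(+1)$-surgery is not even available. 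The paper avoids this entirely: by construction, every positive contact surgery is obtained by gluing a basic slice to $\bd N(L)$ so that the complement has two meridional dividing curves and then filling with a tight solid torus; hence every positive surgery, for every coefficient, contains $(S^3\backslash N(K),\xi^{\pm}(L))$ as a contact submanifold, and it suffices to show these complement structures are overtwisted --- one argument covering all coefficients simultaneously.

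Stages 2 and 3 also miss the key mechanisms. For representatives stabilised with only one sign (e.g.\ $tb=-4$, $rot=\pm1$), the ``opposite basic slices'' phenomenon kills only one of $\xi^{\pm}(L)$; the other coincides with the corresponding structure for $L_{-3}$, because the basic slice from slope $-3$ to $\infty$ splits through slope $t$. So not every stabilised case can be dispatched by an explicit bypass near the surgery torus; the paper instead reduces all such cases to the single $tb=-3$ case. For that case, your plan --- classify tight structures on the surgered small Seifert fibered space and compare $d$-invariants --- is not executable as stated: the surgered manifolds do carry tight contact structures, vanishing of $c(\xi)$ does not obstruct tightness, and $d$-invariant data does not distinguish the surgery structure from a tight candidate. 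The paper's route is different and is what makes the argument close: classify tight contact structures on the genus-one fibered complement $S^3\backslash N(K)$ with meridional boundary dividing set after normalising the dividing set on a convex fiber (Lemma~\ref{f8normalise} and Lemma~\ref{f8classification}), obtaining only two candidates; construct both explicitly (from the figure-eight open book and from the Stein fillable torus bundle) and show each has non-vanishing sutured invariant $EH$; then contrast with $EH(\xi^{\pm}(L))=0$, which follows from $\LOSS(L)=0$. That comparison of sutured contact classes on the complement, not a classification on the closed surgered manifold, is the missing idea needed to promote vanishing of the contact invariant to overtwistedness.
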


\begin{remark}
One should not conclude from Theorem~\ref{maintheorem} that the manifolds resulting from surgery on the figure-eight support no tight contact structure: in fact, they all support tight contact structures.  However, they do not arise from positive contact surgery on a figure-eight knot in $(S^3, \xi_{\rm{std}})$.
\end{remark}

The proof uses convex surfaces and the Heegaard Floer contact class.  In particular, given any Legendrian knot $L$ we show that if any positive contact surgery on $L$ is tight, then a particular contact structure on $S^3 \backslash N(K)$ is also tight. For the figure-eight knot, we can show that this contact structure $\xi$ on $S^3 \backslash N(K)$ has vanishing Heegaard Floer contact class. We then use convex surfaces to classify all tight contact structures on $S^3 \backslash N(K)$ that induce a particular set of dividing curves on a convex Seifert surface (the same set of curves can also be found in $\xi$).  We then construct these tight contact structures, and show that they have non-vanishing Heegaard Floer contact class.  This shows that $\xi$ is overtwisted, and proves Theorem~\ref{maintheorem}.

Beyond the figure-eight knot, it is unclear how successful this approach will be.  The fact that the figure-eight knot is fibred and genus 1 play a large role in making the classification of relevant tight contact structures on $S^3 \backslash N(K)$ possible.  However, the approach of showing that a particular contact structure on $S^3 \backslash N(K)$ is overtwisted is more widely applicable, as can be seen in \cite{Conway:transverse}.

In all known cases where the result of positive contact surgery on a Legendrian knot $(S^3, \xi_{\rm{std}})$ is tight, we also know that the Heegaard Floer contact invariant is non-vanishing.  This paper, along with the results in \cite{Conway:transverse}, lend support toward a positive answer to this question:

\begin{question}
Let $\Mxi$ be the result of some positive contact surgery on a Legendrian knot in $(S^3, \xi_{\rm{std}})$.  Is $\xi$ tight if and only if its Heegaard Floer contact class is non-vanishing?
\end{question}

\subsection*{Acknowledgements}

The author is indebted to John Etnyre for many helpful discussions.  The results in this paper originally appeared as part of the author's doctoral thesis.  This work was partially supported by NSF Grant DMS-13909073.

\section{Contact Geometric Background}
\label{sec:background}

We begin with a brief reminder of standard theorems about contact structures on $3$-manifolds which we will use throughout this paper.  We assume a basic knowledge of contact structures at the level of \cite{Etnyre:contactlectures,Etnyre:contactlectures1}.

\subsection{Farey Graph}\label{sec:farey}
The Farey graph is the $1$-skeleton of a tessellation of the hyperbolic plane by geodesic triangles shown in Figure~\ref{fig:farey}, where the endpoints of the geodesics are labeled.  The labeling, shown in Figure~\ref{fig:farey}, is determined as follows: let the left-most point be labeled $\infty = 1/0$ and the right-most point be labeled $0$.  Given a geodesic triangle where two corners are already labeled $a/b$ and $c/d$, then the third corner is labeled $(a+c)/(b+d)$.  For triangles in the upper half of the plane, we treat $0$ as $0/(-1)$, whereas for triangles in the lower half of the plane, we treat $0$ as $0/1$.  Thus, the labels on the upper half are all negative, and those on the lower half are all positive.  Every rational number and infinity is found exactly once as a label on the Farey graph.

\begin{figure}[htbp]
\begin{center}
\vspace{1cm}
\scalebox{.85}{\begin{overpic}[scale=1.7,tics=20]{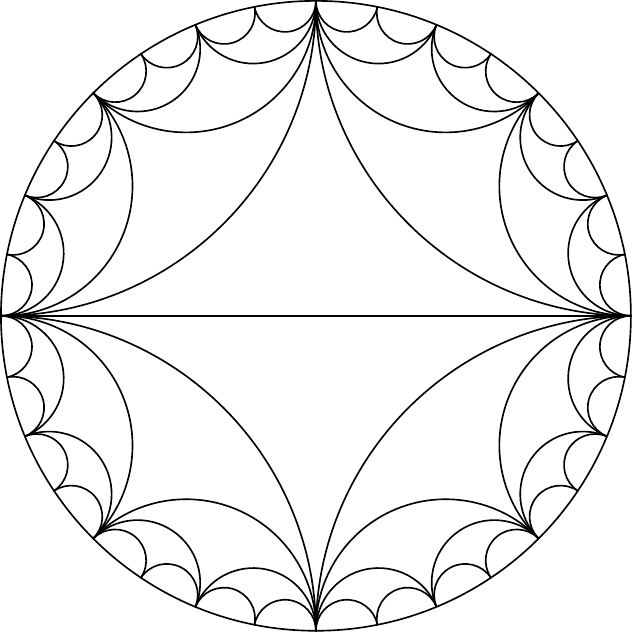}
\put(-20,153){\LARGE $\infty$}
\put(314,152){\LARGE $0$}
\put(153,-15){\LARGE $1$}
\put(147,314){\LARGE $-1$}
\put(25,267){\LARGE $-2$}
\put(-10,216){\LARGE $-3$}
\put(80,314){\LARGE $\displaystyle\frac{3}{-2}$}
\put(269,270){\LARGE $\displaystyle\frac{1}{-2}$}
\put(303,215){\LARGE $\displaystyle\frac{1}{-3}$}
\put(215,307){\LARGE $\displaystyle\frac{2}{-3}$}
\put(267,25){\LARGE $\displaystyle\frac{1}{2}$}
\put(302,85){\LARGE $\displaystyle\frac{1}{3}$}
\put(215,-15){\LARGE $\displaystyle\frac{2}{3}$}
\put(88,-15){\LARGE $\displaystyle\frac{3}{2}$}
\put(38,34){\LARGE $2$}
\put(-5,90){\LARGE $3$}
\end{overpic}}
\vspace{1cm}
\caption[The Farey graph]{The Farey graph.}
\label{fig:farey}
\end{center}
\end{figure}

\subsection{Convex Surfaces}

We introduce the basics of convex surfaces.  See \cite{Etnyre:convex} for more details.

A surface $\Sigma$ (possibly with boundary) in a contact manifold $\Mxi$ is called {\it convex} if there exists a {\it contact vector field} $v$ such that $v$ is transverse to $\Sigma$.  Here, a {\it contact vector field} is a vector field whose flow preserves the contact planes.  Using the contact vector field $v$, it is not hard to see that convex surfaces have a neighbourhood contactomorphic to $\Sigma \times \R$ with an $\R$-invariant contact structure, called a {\it vertically-invariant neighbourhood} of $\Sigma$.

Given a surface $\Sigma$ in $\Mxi$ and the characteristic foliation $\mathcal{F}$ on $\Sigma$ induced by $\xi$, we say that a multi-curve $\Gamma$ on $\Sigma$ {\it divides} $\mathcal{F}$ if

\begin{itemize}
\item $\Sigma \backslash \Gamma = \Sigma_+ \sqcup \Sigma_-$,
\item $\Gamma$ is transverse to the singular foliation $\mathcal{F}$, and
\item there is a volume form $\omega$ on $\Sigma$ and a vector field $w$ such that
\begin{itemize}
\item $\pm \mathcal{L}_w \omega > 0$ on $\Sigma_{\pm}$,
\item $w$ directs $\mathcal{F}$, and
\item $w$ points out of $\Sigma_+$ along $\Gamma$.
\end{itemize}
\end{itemize}

\begin{thm}[Giroux \cite{Giroux:convex}]\label{convex generic} 
%

A closed surface $\Sigma$ is $C^\infty$-close to a convex surface.  If $\Sigma$ is a surface with Legendrian boundary such that the twisting of the contact planes along each boundary component is non-positive when measured against the framing given by $\Sigma$, then $\Sigma$ can be $C^0$-perturbed in a neighbourhood of the boundary and $C^\infty$-perturbed on its interior to be convex.

If $\Sigma \subset \Mxi$ is an orientable surface, and its boundary (if it is non-empty) is Legendrian, then $\Sigma$ is a convex surface if and only if its characteristic foliation has a dividing set.  Given a convex surface $\Sigma$ with dividing curves $\Gamma$, and any singular foliation $\mathcal{F}$ on $\Sigma$ divided by $\Gamma$, then $\Sigma$ can be perturbed to a convex surface with characteristic foliation $\mathcal{F}$. \end{thm}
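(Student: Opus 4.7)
The plan is to package the three assertions in Theorem~\ref{convex generic} through the standard local model for a convex surface. If $v$ is a contact vector field transverse to $\Sigma$, then a neighbourhood of $\Sigma$ is contactomorphic to $\Sigma \times \R$ with an $\R$-invariant contact form $\alpha = \beta + u\,dt$, where $\beta$ is a $1$-form on $\Sigma$ and $u: \Sigma \to \R$; the dividing set is then $\Gamma = u^{-1}(0)$ and the characteristic foliation is directed by the vector field dual to $\beta$ via the area form $d\beta + du\wedge\beta/u$. So throughout, convexity for $\Sigma$ is equivalent to producing either such a transverse $v$ or, dually, such a function $u$ from the characteristic foliation.

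For the closed-surface statement, the strategy is a genericity argument. After a $C^\infty$-small perturbation of $\Sigma$, the characteristic foliation $\mathcal{F}$ can be made Morse--Smale: all singularities are elliptic or hyperbolic, all closed orbits are non-degenerate, and there are no saddle-connections between hyperbolic singularities of the same sign. In this normal form one constructs $\Sigma_+$ by taking small discs about each positive elliptic singularity and extending along flow lines until one hits the stable manifold of a negative hyperbolic singularity (and symmetrically for $\Sigma_-$). The absence of same-sign saddle-connections is exactly what makes the boundary between $\Sigma_+$ and $\Sigma_-$ a smooth multi-curve transverse to $\mathcal{F}$; the Lie-derivative condition on $\omega$ is then obtained by rescaling a reference area form using a bump-function adapted to the signs.

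For the Legendrian-boundary statement, the subtlety is that $C^\infty$-perturbations near $\partial \Sigma$ cannot change the twisting number of $\xi$ along the boundary, so we need the hypothesis that this twisting is non-positive in order to fit the boundary into the standard $\R$-invariant model. A $C^0$-perturbation in a collar of $\partial\Sigma$ puts the characteristic foliation into a standard form with dividing curves meeting $\partial\Sigma$ transversely in $-2\,\mathrm{tw}$ points; then the closed-surface argument applies in the interior. The ``only if'' direction of the final claim is immediate from the local model, and the ``if'' direction together with the realisation of any prescribed divided foliation $\mathcal{F}$ is the Giroux Flexibility (Realisation) Lemma.

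The hard part will be this last realisation step: given $\Gamma$ and any singular foliation $\mathcal{F}$ divided by $\Gamma$, one must isotope $\Sigma$ inside its vertically-invariant neighbourhood so that the new characteristic foliation is exactly $\mathcal{F}$. The idea is to write candidate surfaces as graphs $\{(x, f(x))\}$ in $\Sigma\times\R$ and solve, by a Moser-type argument, a one-parameter family of ODEs interpolating between the original foliation and $\mathcal{F}$. The compatibility of $\mathcal{F}$ with $\Gamma$ (same sign regions, transversality, and the Liouville condition) is precisely what guarantees the required vector field is integrable and the resulting isotopy stays within the invariant neighbourhood. Once this is in hand, the characterisation and the perturbation statements both follow cleanly.
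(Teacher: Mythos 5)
This is a background theorem quoted from Giroux \cite{Giroux:convex}; the paper gives no proof of it, so there is nothing internal to compare your argument against --- the only check is against the standard proofs in the literature. Your outline does follow that standard route: the $\R$-invariant model $\beta + u\,dt$ with $\Gamma = u^{-1}(0)$, genericity of Morse--Smale characteristic foliations for the closed case, the boundary normal form under the non-positive twisting hypothesis, and Giroux flexibility for the realisation statement. As an outline it is sound, and it is the same architecture one finds in Giroux's paper and in expository accounts (Etnyre, Honda, Geiges).

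As a proof, however, it has gaps. First, some dynamical details are off: the obstruction to building a dividing set is not ``saddle connections between hyperbolic singularities of the same sign'' but retrograde connections (an orbit from a negative hyperbolic point to a positive one), and $\Sigma_\pm$ must be assembled not only from basins of elliptic points but also from positive/negative hyperbolic singularities and attracting/repelling closed orbits; the genericity of Morse--Smale foliations itself needs a Peixoto-type argument, which you do not address. Second, the formula you give for the area form directing the characteristic foliation ($d\beta + du\wedge\beta/u$) is not meaningful along $\Gamma$; the clean statement is that the foliation is directed by $\ker\beta$ with singularities at $\beta = 0$, subject to the contact condition $u\,d\beta + \beta\wedge du > 0$. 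Most importantly, the realisation step --- which you correctly identify as the hard part --- is only gestured at: ``write the surfaces as graphs and run a Moser-type argument'' is the shape of Giroux's flexibility lemma, but the actual content is the uniqueness statement for $\R$-invariant (vertically invariant) contact structures inducing foliations divided by the same $\Gamma$, together with the construction of an invariant structure realising any such $\mathcal{F}$; without that, the final assertion of the theorem is asserted rather than proved. Since the paper uses this theorem as a black box, none of this affects the paper, but your write-up should either cite Giroux for the flexibility lemma or supply that argument.
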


In particular, convex surfaces are generic, and the germ of the contact structure at a convex surface is determined (up to a $C^0$-perturbation of the surface) by its dividing curves and the signs of the regions $\Sigma_\pm$.

A properly-embedded graph $G$ on a convex surface $\Sigma$ is {\it non-isolating} if $G$ intersects the dividing curves $\Gamma$ transversely, and each component of $\Sigma \backslash G$ has non-trivial intersection with $\Gamma$.

\begin{thm}[Honda \cite{Honda:classification1}]\label{LRP} If $G$ is a non-isolating properly-embedded graph on a convex surface $\Sigma$, then there is an isotopy of $\Sigma$ relative to its boundary such that $G$ is contained in the new characteristic foliation.  If $G$ is a simple closed curve, then the twisting of the contact planes along $L$ with respect to the framing on $G$ given by $\Sigma$ is equal to $$tw(G,\Sigma) = -\frac{|G \cap \Gamma|}{2}.$$\end{thm}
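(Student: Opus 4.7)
My plan is to deduce this from the Giroux flexibility part of Theorem~\ref{convex generic} (the last sentence there): once $\Sigma$ is convex with dividing set $\Gamma$, any singular foliation divided by $\Gamma$ can be realized as the characteristic foliation after a $C^0$-perturbation. So the entire content becomes constructing a singular foliation $\mathcal{F}'$ on $\Sigma$ that is divided by $\Gamma$ and has $G$ as a union of leaves and singular points.

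First I would handle the local picture. By hypothesis $G$ meets $\Gamma$ transversely, so a regular neighbourhood $N(G \cup \Gamma)$ retracts onto $G \cup \Gamma$ and decomposes $\Sigma$ into finitely many compact regions $R_1,\dots,R_k$, where each $R_i$ has boundary consisting of arcs in $G$ and arcs in $\Gamma$. The non-isolating hypothesis guarantees that $\partial R_i$ contains at least one $\Gamma$-arc. On $N(\Gamma)$, take the standard model whose leaves are transverse to $\Gamma$ and flow from the positive side to the negative side, and on $N(G) \setminus \Gamma$ direct a foliation to have $G$ as a union of leaves, with singular points placed at the vertices $G \cap \Gamma$ where the two models meet; these can be arranged as hyperbolic-type singularities consistent with the signs of the adjacent regions. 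This gives a foliation defined on $N(G \cup \Gamma)$ that is divided by $\Gamma$ there.

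Next I would extend this to each region $R_i$. Because $\partial R_i$ contains a $\Gamma$-arc on which the partial foliation is already transverse and oriented (outward from $\Sigma_+$, inward to $\Sigma_-$), one can put a Morse-Smale vector field $w$ on $R_i$ that extends the boundary data, adding a single source in $R_i \cap \Sigma_+$ and a single sink in $R_i \cap \Sigma_-$ as needed (one of the two might be absent if the corresponding sign-region does not meet $R_i$, but the non-isolating hypothesis precisely rules out the degenerate case where neither occurs). A routine check produces the volume form $\omega$ satisfying $\pm\mathcal{L}_w\omega > 0$ on $\Sigma_\pm$, so the resulting foliation $\mathcal{F}'$ is globally divided by $\Gamma$. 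Applying Giroux's flexibility to $\mathcal{F}'$ yields the desired isotopy.

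Finally, for the twisting formula when $G$ is a simple closed curve: once $G$ lies in the characteristic foliation it is Legendrian, and one can compute $tw(G,\Sigma)$ in the vertically-invariant neighbourhood of the perturbed $\Sigma$. Near each transverse intersection point $p \in G \cap \Gamma$, the contact planes make a half-twist relative to $\Sigma$ (this is the local model of a vertically-invariant neighbourhood along a dividing curve), while away from $\Gamma$ the twisting is constant. Summing these contributions gives $tw(G,\Sigma) = -|G\cap\Gamma|/2$.

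I expect the main obstacle to be the middle step: arranging the Morse-theoretic extension across each region $R_i$ so that the vector field $w$ simultaneously directs $\mathcal{F}'$, has $G$ in its invariant set, and satisfies the Lie-derivative inequality on both $\Sigma_+$ and $\Sigma_-$. The non-isolating hypothesis is exactly what is needed to avoid a region $R_i$ lying entirely in $\Sigma_+$ or $\Sigma_-$ with no $\Gamma$ on its boundary, which would force a closed orbit of $w$ and break the divided condition.
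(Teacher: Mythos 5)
The paper offers no proof of this statement: it is quoted from Honda (building on work of Giroux and Kanda), and your outline follows exactly the standard argument that Honda gives --- construct a singular foliation divided by $\Gamma$ containing $G$ as a union of leaves and singular points, invoke the flexibility part of Theorem~\ref{convex generic}, and obtain the twisting formula from the vertically-invariant local model (one negative half-twist per point of $G \cap \Gamma$). So the route is the right one, but one step fails as written. You place hyperbolic singularities at the points of $G \cap \Gamma$; this is exactly where singularities are forbidden. By the definition of a dividing set used in the paper, $\Gamma$ must be transverse to $\mathcal{F}$ and the directing field $w$ must point out of $\Sigma_+$ along $\Gamma$, so $w$ is nowhere zero on $\Gamma$: a foliation with a singular point on $\Gamma$ is not divided by $\Gamma$, and Giroux flexibility does not apply to it. At each crossing, $G$ should instead be a nonsingular leaf crossing $\Gamma$ transversely, necessarily from $\Sigma_+$ to $\Sigma_-$. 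The singularities your construction genuinely needs sit elsewhere: since every leaf crosses $\Gamma$ only from $+$ to $-$, a closed component of $G$ that meets $\Gamma$ cannot be a single nonsingular closed orbit, so one places a positive (source-like) singularity on each subarc of $G$ lying in $\Sigma_+$ and a negative one on each subarc in $\Sigma_-$ (and singularities at the vertices of $G$ when it is a graph, kept off $\Gamma$). With that relocation the flow directions along $G$ become consistent and the rest of your construction makes sense.

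Two smaller remarks. Each complementary region $R_i$ of $G \cup \Gamma$ lies entirely in $\Sigma_+$ or entirely in $\Sigma_-$, so you add either a source or a sink in it, never both; and the cleanest way to see where the non-isolating hypothesis enters is the flux computation you hint at in your last paragraph: $\int_{R_i} \mathcal{L}_w\omega$ equals the flux of $w$ through $\partial R_i$, which vanishes along the $G$-arcs, so a region of $\Sigma_\pm$ with no $\Gamma$ in its boundary would make the sign condition $\pm\mathcal{L}_w\omega>0$ impossible. The twisting computation at the end is Kanda's and is fine as a sketch, once $G$ has been realized inside a vertically-invariant neighbourhood as above.
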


This is commonly called the {\it Legendrian realisation principle}.  In particular, a simple closed curve in $\Sigma$ that is non-separating can always be Legendrian realised on a convex surface.  If $L$ is a null-homologous Legendrian knot bounding a convex surface, then $tw(L,\Sigma) = tb(L)$, and so $tb(L) = -|L \cap \Gamma|/2$.

Giroux has shown that there are restrictions on dividing curves in tight manifolds.  This result is often called {\it Giroux's Criterion}.

\begin{thm}[Giroux \cite{Giroux:convex}]\label{contractible} If $\Sigma = S^2$ is convex, then a vertically-invariant neighbourhood of $\Sigma$ is tight if and only if the dividing set $\Gamma$ is connected.  If $\Sigma \neq S^2$, then a vertically-invariant neighbourhood of $\Sigma$ is tight if and only if $\Gamma$ has no contractible components.\end{thm}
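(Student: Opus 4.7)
The plan is to prove Giroux's criterion by establishing each biconditional, with the Legendrian Realisation Principle (Theorem~\ref{LRP}) serving as the main tool for detecting overtwisted disks.

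For the ``only if'' direction, assume $\Sigma \neq S^2$ and $\Gamma$ has a homotopically trivial component $\gamma$. I would choose $\gamma$ innermost, so that $\gamma$ bounds a disk $D \subset \Sigma$ with $D \cap \Gamma = \gamma$ and $D$ lies entirely in (say) $\Sigma_+$. Let $\alpha$ be a simple closed curve parallel to $\gamma$ pushed slightly into $\Sigma_-$; then $\alpha$ is disjoint from $\Gamma$ and bounds a disk $D' \supset D$ in $\Sigma$. Provided $\Gamma$ has at least one additional component, both components of $\Sigma \setminus \alpha$ meet $\Gamma$, so $\alpha$ is non-isolating. By Theorem~\ref{LRP}, $\alpha$ realises as a Legendrian knot with $tw(\alpha, \Sigma) = -|\alpha \cap \Gamma|/2 = 0$. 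Since $\alpha$ bounds a disk with surface-framing equal to the contact framing, this disk can be perturbed to an overtwisted disk, so the neighbourhood is overtwisted. The special case $\Gamma = \{\gamma\}$ is handled by a more delicate argument that exploits the nontrivial topology of $\Sigma \setminus D$ to Legendrian-realise a suitable graph. The case $\Sigma = S^2$ with disconnected $\Gamma$ follows identically, since any innermost component of $\Gamma$ on $S^2$ is automatically contractible.

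For the ``if'' direction, by Theorem~\ref{convex generic} it suffices to exhibit one tight $\R$-invariant contact structure on $\Sigma \times \R$ with the prescribed dividing set, since the germ of the contact structure near $\Sigma$ is determined (up to $C^0$-perturbation) by its dividing data. I would build such a model by decomposing $\Sigma$ along $\Gamma$ into the regions $\Sigma_\pm$, placing a horizontal model contact structure on each $\Sigma_\pm \times \R$, and gluing across annular neighbourhoods of the dividing curves using the standard $\R$-invariant tight structure on $T^2 \times \R$ locally. The hypothesis that $\Gamma$ has no contractible component (or is connected on $S^2$) is precisely the condition preventing the resulting structure from containing a $tb = 0$ Legendrian unknot, hence precluding an overtwisted disk.

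The main obstacle is the ``if'' direction: certifying tightness of the constructed model is nontrivial and typically proceeds by embedding the invariant neighbourhood into a known tight (e.g.\ Stein-fillable) ambient manifold, or by appealing to a tightness-detecting invariant. The forward direction, by contrast, reduces cleanly to producing a Legendrian unknot with $tb = 0$ via Theorem~\ref{LRP}, with only minor topological bookkeeping needed to cover the edge cases.
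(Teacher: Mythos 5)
First, note that the paper does not prove this statement at all: it is quoted as background and attributed to Giroux \cite{Giroux:convex}, so there is no proof in the paper to compare against; what can be assessed is whether your sketch would actually constitute a proof, and as it stands it would not. The part you do carry out is fine: when $\Gamma$ has a contractible component $\gamma$ \emph{and} at least one further component, an innermost choice of $\gamma$, a parallel push-off $\alpha$ disjoint from $\Gamma$, and Theorem~\ref{LRP} produce a Legendrian unknot with twisting $0$ relative to a disc it bounds in $\Sigma$, which is incompatible with tightness. But the two places you defer are exactly where the content of the theorem lies. For the case $\Gamma=\{\gamma\}$ with $\Sigma\neq S^2$, your proposed fix (``Legendrian-realise a suitable graph'') runs directly into the non-isolating hypothesis of Theorem~\ref{LRP}: any closed curve disjoint from $\Gamma$ that bounds a disc containing $\gamma$ necessarily isolates the rest of $\Sigma$ from $\Gamma$, and any essential curve you can legitimately realise (e.g.\ a non-separating curve in the complement of the disc) has twisting $0$ but bounds no disc, so it does not by itself yield an overtwisted disc --- essential zero-twisting Legendrian curves exist in tight manifolds. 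This case genuinely requires a different mechanism (for instance arguments that modify the dividing set, or an explicit overtwisted disc in the invariant model), and your sketch does not supply one.

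The ``if'' direction is an even larger gap: you describe gluing model pieces over $\Sigma_\pm$ and annular neighbourhoods of $\Gamma$, and then state that certifying tightness of the result is the main obstacle, to be handled by embedding into a known tight manifold or by an unspecified invariant. That certification \emph{is} Giroux's theorem; asserting that the absence of contractible components ``is precisely the condition preventing'' a $tb=0$ unknot is a restatement of the conclusion, not an argument, since tightness of the $\R$-invariant structure with essential dividing set requires a genuine proof (Giroux's original argument, or e.g.\ universal-cover/filling-type arguments). So the proposal correctly reduces the easy half of the overtwisted direction to Theorem~\ref{LRP}, but both the single-contractible-curve case and the entire tightness direction are left unproved, and the route you indicate for the former would fail as stated.
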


Given two convex surfaces $\Sigma_1$ and $\Sigma_2$ that intersect in a Legendrian curve $L$, Kanda \cite{Kanda} and Honda \cite{Honda:classification1} have shown that between each intersection of $L$ with $\Gamma_{\Sigma_1}$ is exactly one intersection of $L$ with $\Gamma_{\Sigma_2}$, as in Figure~\ref{fig:two convex surfaces}.  Honda further showed that there is a way to ``round edges'' at $L$ and get a new convex surface.  The dividing set on the new surface is derived from $\Gamma_{\Sigma_i}$ as in Figure~\ref{fig:rounding convex boundary}.

\begin{figure}[htbp]
\begin{center}
\scalebox{.95}{\begin{overpic}[scale=1,tics=20]{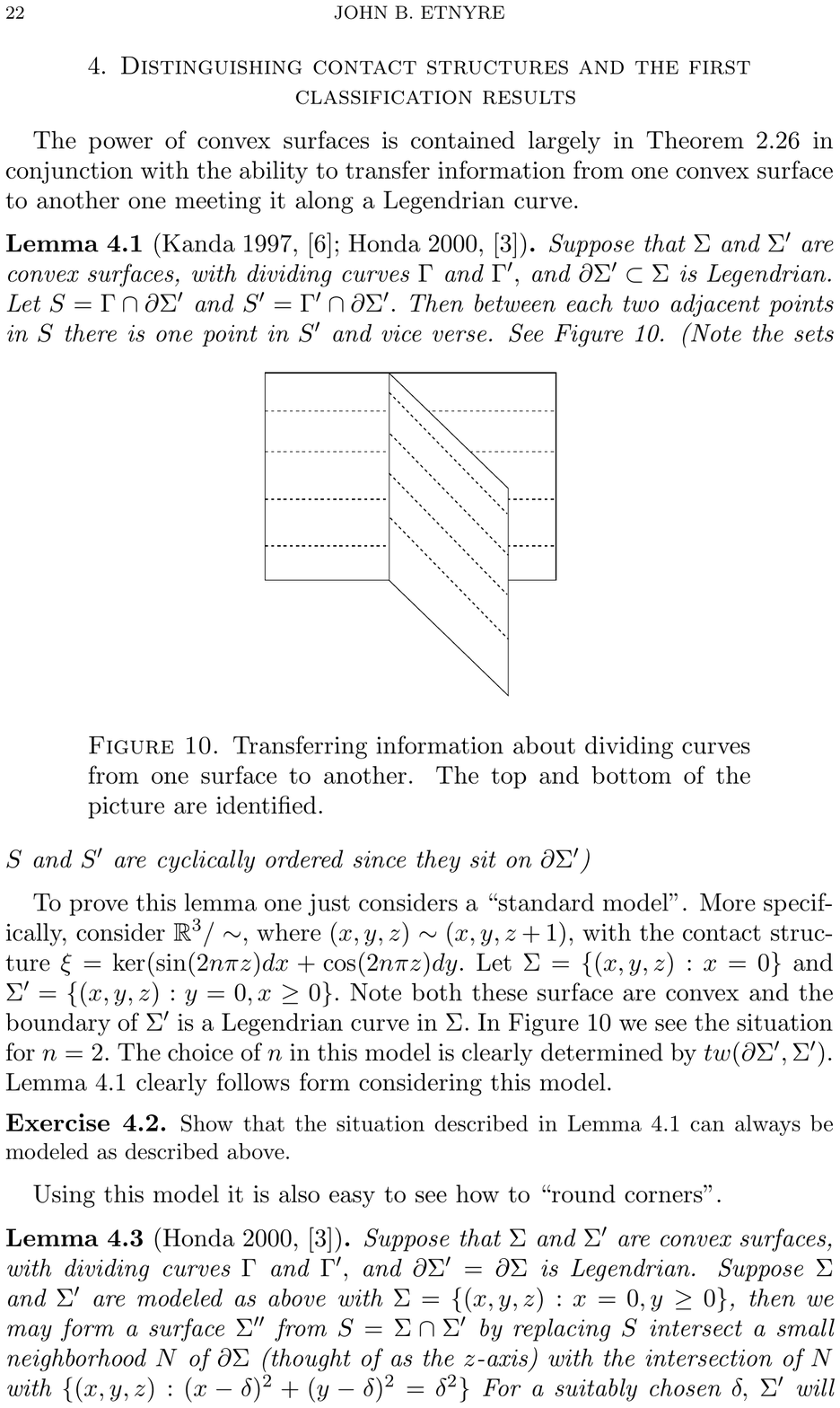}
\put(40,40){$\Sigma_1$}
\put(80,20){$\Sigma_2$}
\end{overpic}}
\caption[Intersecting Convex Surfaces]{Two convex surfaces intersecting in a Legendrian curve.  This figure is reproduced from \cite[Figure 10]{Etnyre:convex}.}
\label{fig:two convex surfaces}
\end{center}
\end{figure}

\begin{figure}[htbp]
\begin{center}
\scalebox{.95}{\begin{overpic}[scale=1,tics=20]{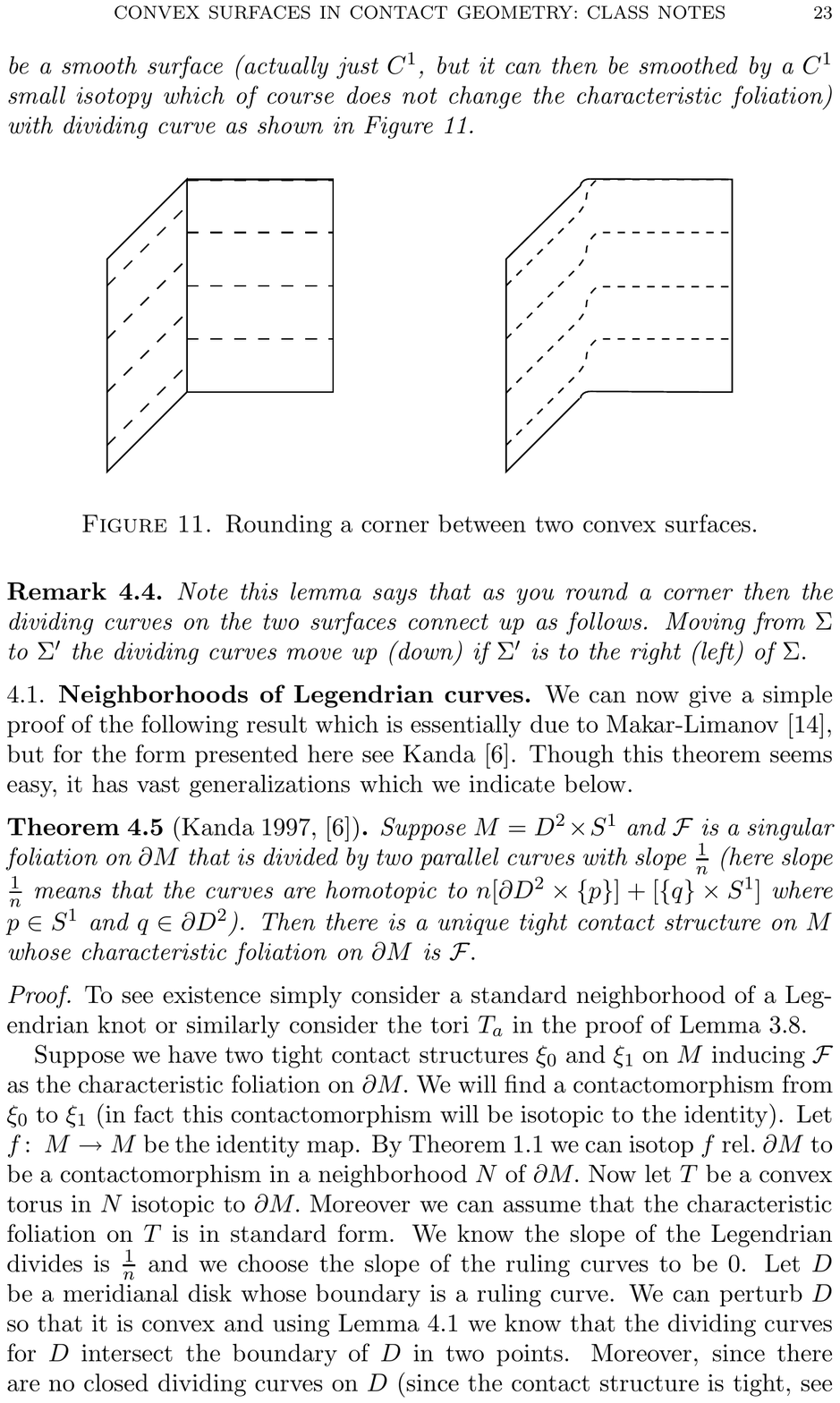}
\put(40,15){$\Sigma_1$}
\put(80,37){$\Sigma_2$}
\end{overpic}}
\caption[Rounding convex surfaces]{``Rounding edges'' of intersecting convex surfaces. This figure is reproduced from \cite[Figure 11]{Etnyre:convex}.}
\label{fig:rounding convex boundary}
\end{center}
\end{figure}

A special case of ``rounding edges'' at the intersection of two convex surfaces is when $\Sigma_2$ is a {\it bypass}.  This is when $\Sigma_2$ is a disc with Legendrian boundary with $tb = -1$, such that $\Sigma_1 \cap \Sigma_2$ is an arc $\alpha$ intersecting $\Gamma_{\Sigma_1}$ in three points, two of which are the endpoints of $\alpha$; we further require that the endpoints of $\alpha$ are elliptic singularities of the characteristic foliation on $\Sigma_2$.  By the above discussion, the dividing set $\Gamma_{\Sigma_2}$ is a single arc with endpoints on $\alpha$.  By Theorem~\ref{convex generic}, we can arrange for there to be a unique hyperbolic singularity on $\bd\Sigma_2$ that lies on $\alpha$ and is between the two points $\alpha \cap \Gamma_{\Sigma_2}$.  The sign of this hyperbolic singularity is called the {\it sign} of the bypass.

Honda proved \cite{Honda:classification1} that in a neighbourhood of $\Sigma_1 \cup \Sigma_2$, there is a one-sided neighbourhood $\Sigma_1 \times [0, 1]$ of $\Sigma_1$ such that $\Sigma_1 \times \{0, 1\}$ is convex, the dividing curves on $\Sigma_1 \times \{0\}$ are $\Gamma_{\Sigma_1}$, and the dividing curves on $\Sigma_1 \times \{1\}$ are $\Gamma_{\Sigma_1}$ changed along a neighbourhood of $\alpha$ as in Figure~\ref{fig:bypass}.  We say that the convex surface $\Sigma_1 \times \{1\}$ is obtained from $\Sigma_1$ by a bypass attachment along $\Sigma_2$.

\begin{figure}[htbp]
\begin{center}
\vspace{0.2cm}
\begin{overpic}[scale=1.5,tics=20]{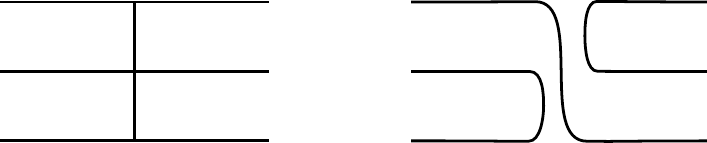}
\put(62,15){$\alpha$}
\put(128,31){\line(1,0){40}}
\put(160.5,28.3){$>$}
\end{overpic}
\caption[Bypass attachment]{The result of performing a bypass on the dividing curves.}
\label{fig:bypass}
\end{center}
\end{figure}

If $\Sigma_1$ is a convex $T^2$ (resp.\ $T^2\backslash D^2$) with 2 parallel dividing curves, then we can choose the characteristic foliation on $\Sigma_1$ such that it consists of two curves called {\it Legendrian divides} parallel to the dividing curves along with a linear foliation of the torus by curves not parallel to the dividing curves, called {\it ruling curves}.  Under these hypotheses, Honda proved \cite{Honda:classification1} how the slopes of the dividing curves change under bypass attachments along a ruling curve.  Denote the slope of curves parallel to $\vects{q}{p}$ by $p/q$, as in the Farey graph.

\begin{thm}[Honda \cite{Honda:classification1}]\label{bypass Farey graph} Let $\Sigma_1$ have two dividing curves of slope $s$ and ruling curves of slope $r$.  Let $\Sigma_2$ be a bypass attached to $\Sigma_1$ along a ruling curve.  Then the result $\Sigma'_1$ of a bypass attachment along $\Sigma_2$ has two dividing curves with slope $s'$, where $s'$ is the label on the Farey graph clockwise of $r$ and counter-clockwise of $s$, and such that $s'$ is the label closest to $r$ with an edge to $s$. \end{thm}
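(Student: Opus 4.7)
The plan is to reduce to a normal form via the mapping class group $SL(2,\Z)$ of $T^2$ and then to verify the new dividing slope by a direct geometric computation in the normalized coordinates. The action of $SL(2,\Z)$ on slopes $\Q \cup \{\infty\}$ by M\"obius transformations factors through $PSL(2,\Z)$ on the upper half-plane, inducing orientation-preserving automorphisms of the Farey graph and in particular preserving its clockwise cyclic order. It also takes a convex torus with dividing slope $s$ and ruling slope $r$ to one with slopes $A(s)$ and $A(r)$, and sends bypass discs to bypass discs. Since both the hypothesis and the conclusion of the theorem are $SL(2,\Z)$-equivariant, one may assume $s = \infty$ without loss of generality.

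With $s = \infty$ fixed, present $T^2$ as $\R^2/\Z^2$ with coordinates $(x, y)$ and place the dividing curves $\Gamma_{\Sigma_1} = \{x = 0\} \cup \{x = 1/2\}$. The vertices adjacent to $s = \infty$ in the Farey graph are exactly the integers, and going clockwise from any slope $r = p/q$ (with $q > 0$ and $\gcd(p, q) = 1$) the first integer encountered is $\lfloor p/q \rfloor + 1$; this is the theorem's prediction for $s'$. A ruling arc $\alpha$ of slope $r$ meeting $\Gamma_{\Sigma_1}$ in three points must traverse one full period in the $x$-coordinate, and up to isotopy one may take $\alpha$ to go from $(0, 0)$ to $(0, p/q)$ through the midpoint $(1/2, p/(2q))$.

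The bypass attachment modifies $\Gamma$ in a neighborhood of $\alpha$ according to Figure~\ref{fig:bypass}: the middle intersection of $\alpha$ with $\Gamma$ is removed and the two resulting free endpoints of $\Gamma$ are reconnected along $\alpha$. Lifting this local modification to the universal cover $\R^2$ and verifying that the resulting curves close up coherently under the deck transformations, I would check that $\Gamma'_{\Sigma'_1}$ consists of two parallel simple closed curves of homology class $(1, \lfloor p/q \rfloor + 1)$, hence of slope $s' = \lfloor p/q \rfloor + 1$. Combining this with the $SL(2,\Z)$-equivariance from the first step transports the formula to arbitrary $s$, completing the proof.

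The main technical difficulty lies in this slope computation: one must reconcile the sign convention on the bypass---which side of $\alpha$ the disc $\Sigma_2$ is attached from---with the clockwise convention on the Farey graph, so that the geometric modification in Figure~\ref{fig:bypass} yields the $s'$ predicted by the combinatorial formula rather than its mirror $\lceil p/q \rceil - 1$. Once this is settled, verifying that the new dividing set is a pair of parallel simple closed curves of the claimed slope is a careful but essentially routine diagram chase in the universal cover.
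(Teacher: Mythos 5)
First, a point of comparison: the paper does not prove Theorem~\ref{bypass Farey graph} at all --- it is quoted from Honda's classification paper --- so there is no in-paper argument to measure your proposal against, and it must be judged as a proof of Honda's lemma on its own. Your strategy (use $SL(2,\Z)$-equivariance of both the Farey-graph description and the bypass move to normalize $s=\infty$, then compute the reconnection of the dividing set in the universal cover) is the standard route, essentially the one Honda follows, and your combinatorial reading of the normalized statement is correct: with the conventions of Figure~\ref{fig:farey}, clockwise is the direction of increasing slope, the Farey neighbours of $\infty$ are exactly the integers, and the predicted new slope is $\lceil p/q\rceil=\lfloor p/q\rfloor+1$ (note $q\geq 2$ is automatic, since an attaching arc meeting $\Gamma$ in three points forces the ruling and dividing slopes to have geometric intersection number at least two, so the integer-slope case never arises).

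The gap is that the step carrying all of the mathematical content is announced rather than carried out. Everything before ``I would check that\ldots'' is bookkeeping; the theorem itself is precisely the claim that the local move of Figure~\ref{fig:bypass}, applied along the arc from $(0,0)$ to $(0,p/q)$, produces exactly two parallel essential curves of class $(1,\lfloor p/q\rfloor+1)$, and which of the two bracketing integers appears is exactly the chirality that separates the theorem from its mirror (the Remark following it). You flag this sign reconciliation as the main difficulty, but a complete proof must settle it: fix an orientation of $T^2$ and the coorientation recording which side the bypass disc is attached from, translate the reconnection pattern of Figure~\ref{fig:bypass} into those coordinates (your phrasing ``the middle intersection is removed and the free ends reconnected along $\alpha$'' is mirror-symmetric, so as stated it cannot distinguish $\lceil p/q\rceil$ from $\lfloor p/q\rfloor$), and then do the universal-cover count verifying both that the dividing set still has exactly two components with no contractible pieces and that their class is $(1,\lceil p/q\rceil)$. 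None of this would fail --- the computation is routine once the conventions are pinned down --- but as written the proposal is a correct plan whose decisive computation, and the convention choice it hinges on, are left open.
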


\begin{remark} If $\Sigma_2$ is a bypass for $\Sigma_1$ attached along the back of $\Sigma_1$, then the bypass attachment will change $\Gamma_{\Sigma_1}$ in a manner similar to Figure~\ref{fig:bypass} but reflected in the vertical axis. Theorem~\ref{bypass Farey graph} will hold after reversing the words ``clockwise'' and ``counter-clockwise''.\end{remark}

Bypasses are only useful if we can find them.  To that effect, we have the Imbalance Principle, which allows us to find bypasses on annuli.

\begin{thm}[Honda \cite{Honda:classification1}]\label{imbalance principle} Let $\Sigma$ and $A = S^1 \times [0, 1]$ be two convex surfaces with Legendrian boundary, such that $\Sigma \cap A = S^1 \times \{0\}$.  Then, if the twisting of the contact planes along the boundary of $A$ satisfies $$tw(S^1 \times \{0\},A) < tw(S^1 \times \{1\},A) \leq 0,$$ then there is a bypass for $\Sigma$ along $A$, {\it ie.\ }some subsurface of $A$ is a bypass for $\Sigma$.  \end{thm}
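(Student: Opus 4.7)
The plan is to extract a bypass half-disk from the combinatorics of the dividing set $\Gamma_A$ on the convex annulus $A$. Using the formula $tw(L,\Sigma) = -|L \cap \Gamma_\Sigma|/2$ for a Legendrian boundary component of a convex surface (a consequence of Theorem~\ref{LRP}), the hypothesis translates into
$$|S^1\times\{0\} \cap \Gamma_A| \;>\; |S^1\times\{1\} \cap \Gamma_A| \;\geq\; 0,$$
with both integers even and the left one at least $2$.

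Because $A$ is an annulus, every component of $\Gamma_A$ is either a closed curve or a properly embedded arc, and the arcs split into those joining the two boundary circles and those with both endpoints on a single boundary circle. Letting $m$ denote the number of arcs joining the two boundary circles and $n_i$ the number of arcs with both endpoints on $S^1\times\{i\}$, the intersection counts are $m+2n_0$ and $m+2n_1$, so the imbalance forces $n_0 > n_1 \geq 0$. In particular there is at least one arc $\delta \subset \Gamma_A$ with both endpoints on $S^1\times\{0\}$.

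I would then choose $\delta$ to be innermost among such arcs, so that $\delta$ together with a subarc $\alpha_0 \subset S^1 \times \{0\}$ cobounds a subdisk $D \subset A$ with $D \cap \Gamma_A = \delta$; innermostness rules out both further bottom arcs in $D$ (any arc of $\Gamma_A$ entering $\alpha_0$ would be forced to close up inside $D$) and, by a further innermost argument, closed components of $\Gamma_A$ inside $D$. By the Kanda--Honda interleaving of $\Gamma_A$ and $\Gamma_\Sigma$ along the Legendrian curve $S^1 \times \{0\}$, the two endpoints of $\alpha_0$ (which lie on $\Gamma_A$) bracket exactly one point of $\Gamma_\Sigma$ on $\alpha_0$.

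Finally I would enlarge $D$ slightly inside $A$, pushing past the endpoints of $\alpha_0$, to obtain a disk $D'$ whose intersection with $\Sigma$ is an arc $\alpha \supset \alpha_0$ with endpoints on $\Gamma_\Sigma$ and a unique interior intersection point with $\Gamma_\Sigma$. Using Theorem~\ref{convex generic} to perturb $\partial D'$ to be Legendrian with $tb(\partial D') = -1$ and Legendrian realisation (Theorem~\ref{LRP}) to prescribe the characteristic foliation on $D'$ to have elliptic singularities at the endpoints of $\alpha$ and a single hyperbolic singularity on $\partial D' \setminus \alpha$, the disk $D'$ becomes a bypass for $\Sigma$ along $A$. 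The combinatorial production of the bottom arc $\delta$ is easy; the main obstacle is this last step, namely verifying that the enlarged disk really is a bypass in the precise technical sense (Legendrian boundary with the correct twisting and correct placement of elliptic versus hyperbolic singularities in the characteristic foliation), which requires careful use of the foliation realisation machinery of Giroux and Honda rather than any new idea.
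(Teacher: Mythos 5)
This theorem is quoted background: the paper gives no proof and simply cites Honda, so there is nothing in-paper to compare against. Your argument is the standard one (essentially Honda's own): convert the twisting hypothesis into the count $|S^1\times\{0\}\cap\Gamma_A| > |S^1\times\{1\}\cap\Gamma_A|$, deduce the existence of a dividing arc of $A$ that is boundary-parallel along $S^1\times\{0\}$, take an innermost such arc to cut off a half-disk meeting $\Gamma_A$ in that single arc, use the Kanda--Honda interleaving to see the attaching arc meets $\Gamma_\Sigma$ in exactly three points, and then Legendrian-realise the boundary of a slightly enlarged half-disk to obtain the bypass. That is correct in outline, and you rightly flag the realisation step as where the technical care lies. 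Two small points: closed components of $\Gamma_A$ inside the half-disk are not eliminated by an innermost argument (innermost choices only control arcs); they are excluded because a closed dividing curve inside a disk is contractible, so by Giroux's Criterion (Theorem~\ref{contractible}) a neighbourhood of $A$ would be overtwisted --- i.e.\ one either works in the tight setting where the principle is applied or treats that case separately. Second, when you invoke Theorem~\ref{LRP} to realise the interior arc of $\partial D'$, you should check the non-isolating hypothesis: it can fail only in the degenerate situation where $\Gamma_A$ consists of the single boundary-parallel arc $\delta$ (so $tw(S^1\times\{1\},A)=0$), and that corner case needs a separate word; otherwise the component of $A\setminus\partial D'$ away from $\delta$ meets the remaining dividing curves and the realisation goes through as you describe.
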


In particular, if $S^1 \times \{1\}$ sits on a convex surface $\Sigma'$, and $$\left|\Gamma_{\Sigma} \cap \left(S^1 \times\{0\}\right)\right| > \left|\Gamma_{\Sigma'}\cap\left(S^1 \times\{0\}\right)\right|,$$ then the hypotheses of Theorem~\ref{imbalance principle} hold, and there is a bypass for $\Sigma$ along $A$.

\subsection{Basic Slices}

Consider the manifold $(T^2 \times I,\xi)$, with $\xi$ tight.  Let the two boundary components be convex with two dividing curves each, with slopes $s_0$ and $s_1$.  If $s_0$ and $s_1$ are labels on the Farey graph connected by a geodesic, then $(T^2 \times I,\xi)$ is called a {\it basic slice}.  If not, then the manifold can be cut up into basic slices along boundary parallel convex tori, following the path between $s_0$ and $s_1$ along the Farey graph.

\begin{thm}[Honda \cite{Honda:classification1}]\label{hondaclassification} There are exactly two tight contact structures up to isotopy (and only one up to contactomorphism) on $T^2 \times I$ with a fixed singular foliation on the boundary that is divided by two dividing curves on $T^2 \times \{i\}$ for $i = 0,1$ each of slope $s_i$, where $s_0$ and $s_1$ are labels in the Farey graph connected by a geodesic.
\end{thm}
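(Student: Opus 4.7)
My plan is to reduce the classification to a standard model via the $SL(2,\Z)$ action on $T^2$, cut along a convex annulus and round edges to reduce to a $3$-ball, apply Eliashberg's uniqueness to bound the number of tight structures by two, and then exhibit two concrete models distinguished by a sign invariant. Since $s_0, s_1$ are joined by an edge of the Farey graph, there is a matrix $M \in SL(2,\Z)$ acting as a diffeomorphism of $T^2$ (and hence of $T^2 \times I$) sending the ordered pair $(s_0, s_1)$ to the standard pair $(-1, 0)$, so we may normalise to this case.

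Next I would fix an auxiliary slope $r = \infty$, which meets both boundary slopes minimally, and take a convex annulus $A \subset T^2 \times I$ whose boundary is a pair of Legendrian ruling curves of slope $r$, one on each boundary torus (existence guaranteed by Theorems~\ref{LRP} and~\ref{convex generic}). A standard analysis using the Imbalance Principle (Theorem~\ref{imbalance principle}) and the tightness hypothesis allows us to isotope $A$ so that $\Gamma_A$ consists of exactly two properly embedded arcs. The critical claim is then that neither arc is boundary-parallel: a boundary-parallel arc would give a bypass attachment to one of the boundary tori which, by Theorem~\ref{bypass Farey graph}, produces a parallel convex torus of some new slope $s'$ adjacent in the Farey graph to $s_0$ or $s_1$. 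After absorbing any trivial sub-layers, this would refine $T^2 \times I$ into two basic slices with slopes $s_0 \to s' \to s_1$, contradicting that $s_0$ and $s_1$ are already joined by a single Farey edge.

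With both arcs of $\Gamma_A$ running boundary-to-boundary, I cut $T^2 \times I$ along $A$ and round edges as in Figure~\ref{fig:rounding convex boundary} to obtain a solid torus $V$ with dividing set determined by the $\Gamma_{T^2 \times \{i\}}$ and the through-arcs of $\Gamma_A$. A further convex meridional disc $D \subset V$ with Legendrian boundary cuts $V$ into a $3$-ball, and after rounding, intersection counts force the dividing set on the resulting $S^2$ to be a single closed curve. Eliashberg's uniqueness of tight contact structures on $B^3$ with fixed convex boundary then leaves only the discrete choice of sign assignment to the regions of $D$ upon reassembly, giving at most two isotopy classes of tight $\xi$ on $T^2 \times I$.

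For existence, both classes are realised: inside the standard tight $(T^3, \xi_1)$, a sub-slice $T^2 \times [a, b]$ with boundary slopes $s_0, s_1$ gives one tight basic slice, and the orientation-reversing diffeomorphism $(x, y, t) \mapsto (-x, y, t)$ of $T^2 \times I$ yields the other. The two are distinguished by their relative Euler class, which evaluates to $\pm 1$ on a generator of $H_2(T^2 \times I, \partial; \Z)$, so they are not isotopic rel boundary; however the diffeomorphism just mentioned is itself a contactomorphism between them, giving a unique contactomorphism class. The step I expect to be the main obstacle is the ruling-out of boundary-parallel arcs on $A$: making the Farey-graph obstruction rigorous requires careful bookkeeping with the direction (clockwise versus counter-clockwise per the remark following Theorem~\ref{bypass Farey graph}) in which a bypass attachment moves slopes, and this is the true combinatorial heart of why a Farey-adjacent slice is indivisible.
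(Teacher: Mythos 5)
The paper itself offers no proof of this statement: it is quoted verbatim from Honda \cite{Honda:classification1}, whose Proposition~4.7 your outline otherwise parallels quite closely (normalise by $SL(2,\Z)$, take a vertical convex annulus of ruling slope $\infty$, cut and round edges, finish with Eliashberg's uniqueness on the ball, distinguish the two structures by relative Euler class). The genuine gap is exactly at the step you flag as the heart of the argument: ruling out boundary-parallel arcs of $\Gamma_A$. Your proposed contradiction --- that a bypass would factor $T^2 \times I$ through a third slope $s'$, ``contradicting that $s_0$ and $s_1$ are already joined by a single Farey edge'' --- is not a contradiction at all. Farey adjacency of the boundary slopes does not forbid an interior convex torus of a different slope; the slopes are simply allowed to travel the long way around the Farey circle. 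Concretely, with $(s_0,s_1)=(-1,0)$ and $r=\infty$, the bypass coming from a boundary-parallel arc produces (by Theorem~\ref{bypass Farey graph}) a convex torus of slope $\infty$, i.e.\ a factorisation $-1 \to \infty \to 0$, and tight structures realising this genuinely exist: the rotative (Giroux-torsion) contact structures on $T^2\times I$ are universally tight, can be arranged to have convex boundary with exactly two dividing curves of slopes $-1$ and $0$, and yield infinitely many isotopy classes with the stated boundary data. So the count ``at most two'' is false as literally stated and cannot be derived without the minimal-twisting hypothesis implicit in Honda's notion of a basic slice; in Honda's proof it is precisely minimal twisting (no convex torus parallel to the boundary with slope outside the short Farey arc from $s_0$ to $s_1$) that makes the bypass impossible. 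Your argument never invokes such a hypothesis, so the key exclusion step fails.

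Two smaller corrections. The Imbalance Principle plays no role in normalising $\Gamma_A$, since both boundary components of $A$ have the same twisting $-1$; what you need is the endpoint count together with Giroux's criterion, and you must separately dispose of possible closed core-parallel components of $\Gamma_A$. Also, the map $(x,y,t)\mapsto(-x,y,t)$ is orientation-reversing and therefore carries a positive contact structure to a negative one, so it cannot produce the second tight structure; the diffeomorphism exchanging the two basic slices (and giving the single contactomorphism class asserted in the statement) is $-\mathrm{id}$ on the torus factor, $(x,y,t)\mapsto(-x,-y,t)$, which preserves orientation and all slopes while reversing the sign of the relative Euler class.
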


The two tight contact structures can be distinguished by their relative Euler class, and after picking an orientation, we can call them {\it positive} and {\it negative} basic slices; this orientation is chosen such that when gluing a negative (resp.\ positive) basic slice to the boundary of the complement of a regular neighbourhood of a Legendrian knot, the result is the complement of a regular neighbourhood of its negative (resp.\ positive) stabilisation.

In addition, this classification implies that if we have a basic slice $(T^2 \times I,\xi)$ that can be broken up into two basic slices $(T^2 \times [0, 1/2], \xi_1)$ and $(T^2, \times [1/2, 1], \xi_2)$, then the sign of each of the latter two basic slices agrees with the sign of $(T^2 \times I, \xi)$.  Thus, if the signs disagree, then $(T^2 \times I, \xi)$ is overtwisted (and hence by definition not a basic slice).

\subsection{Contact Surgery}

Given a null-homologous Legendrian knot $L \subset \Mxi$, we start by removing the interior of a standard neighbourhood $N(L)$ of $L$, {\it ie.\ }the interior of a tight solid torus with convex boundary, where the dividing curves have the same slope as the contact framing $tb(L)\mu + \lambda$, where $\mu$ is a meridian and $\lambda$ is the Seifert framing of $L$.

To do \textit{positive contact surgery} on $L$, we first glue a basic slice to $\bd N(L)$ such that the new contact structure on $M \backslash N(L)$ has convex boundary with two meridional dividing curves.  Different sign choices on this basic slice in general give rise to distinct contact structures; we denote by $\xi^+(L)$ (resp.\@ $\xi^-(L)$) the contact structure on $M\backslash N(L)$ coming from gluing on a positive (resp.\@ negative) basic slice. Finally, we then glue a solid torus to the boundary such that the desired topological surgery is achieved, and we extend the contact structure over the solid torus such that it is tight on the solid torus. Different choices of sign on the basic slice and different extensions over the solid torus will in general give rise to distinct contact structures on the surgered manifold, see \cite{Kanda, Honda:classification1}.

\subsection{Heegaard Floer Homology}\label{sec:heegaardfloer}

We make use invariants of contact structures coming from Heegaard Floer theory: for closed contact manifolds $\Mxi$, we have an element $c(\xi) \in \HFhat(-M)$ (see \cite{OS:contact}), and for contact manifolds $(M', \Gamma, \xi')$ with convex boundary, where $\Gamma \subset \bd M'$ is the dividing set, we have an element $EH(\xi) \in \SFH(-M', -\Gamma)$ (see \cite{HKM:sutured}).  If $(M', \Gamma, \xi') \subset \Mxi$ is a contact embedding, then there is a map $\SFH(-M',-\Gamma) \to \HFhat(-M)$ that sends $EH(\xi')$ to $c(\xi)$.

To a Legendrian knot $L \subset \Mxi$, we associate an element $\LOSS(L)$ (defined in \cite{LOSS}) in the knot Heegaard Floer group $\HFK(-M,-L)$.  For knots in $(S^3, \xi_{\rm{std}})$, $\LOSS(L)$ was identified (up to an automorphism of the ambient group) in \cite{BVVV} with a more easily calculable invariant defined in \cite{OST:knots}; this latter invariant can be shown to vanish for any Legendrian figure-eight knot $L$ (as $\HFK(-S^3,-L)$ is trivial in the required grading).  In \cite{SV}, the element $\LOSS(L)$ was also identified with the class $EH(\xi_{\rm{std}}^-(L))$ of $(S^3 \backslash N(K), \xi_{\rm{std}}^-(L))$, under an isomorphism $\HFK(-S^3,-L) \cong \SFH(-S^3 \backslash N(K), -\Gamma_{\rm{meridional}})$.

\section{Surgeries on the Figure-Eight Knot}
\label{sec:figure-eight}

Consider the figure-eight knot $K$ in $S^3$ (see Figure~\ref{figure8knot}).  We will show that the result of any positive contact surgery on any Legendrian realisation of the figure-eight knot in $(S^3, \xi_{\rm{std}})$ is overtwisted.

\begin{figure}[htbp]
\begin{center}
\includegraphics[scale=0.55]{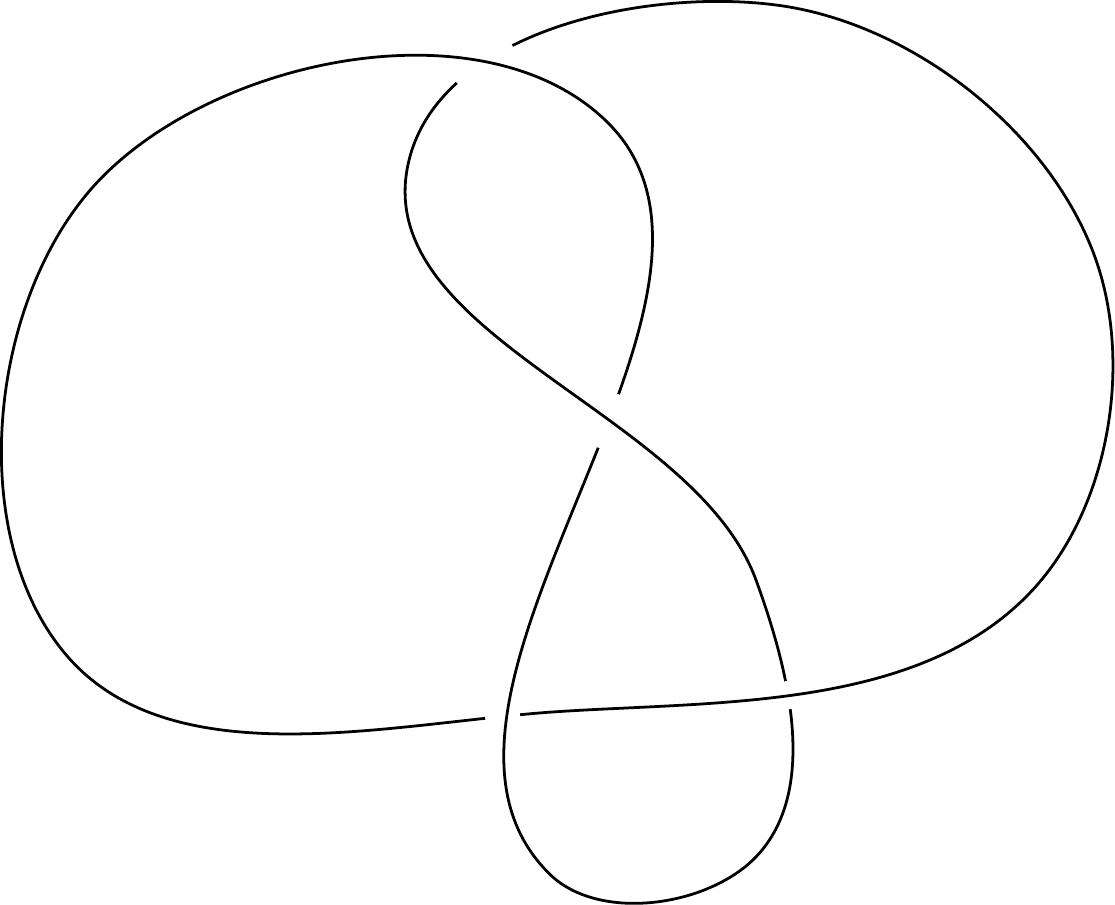} \hfill \includegraphics[scale=0.65]{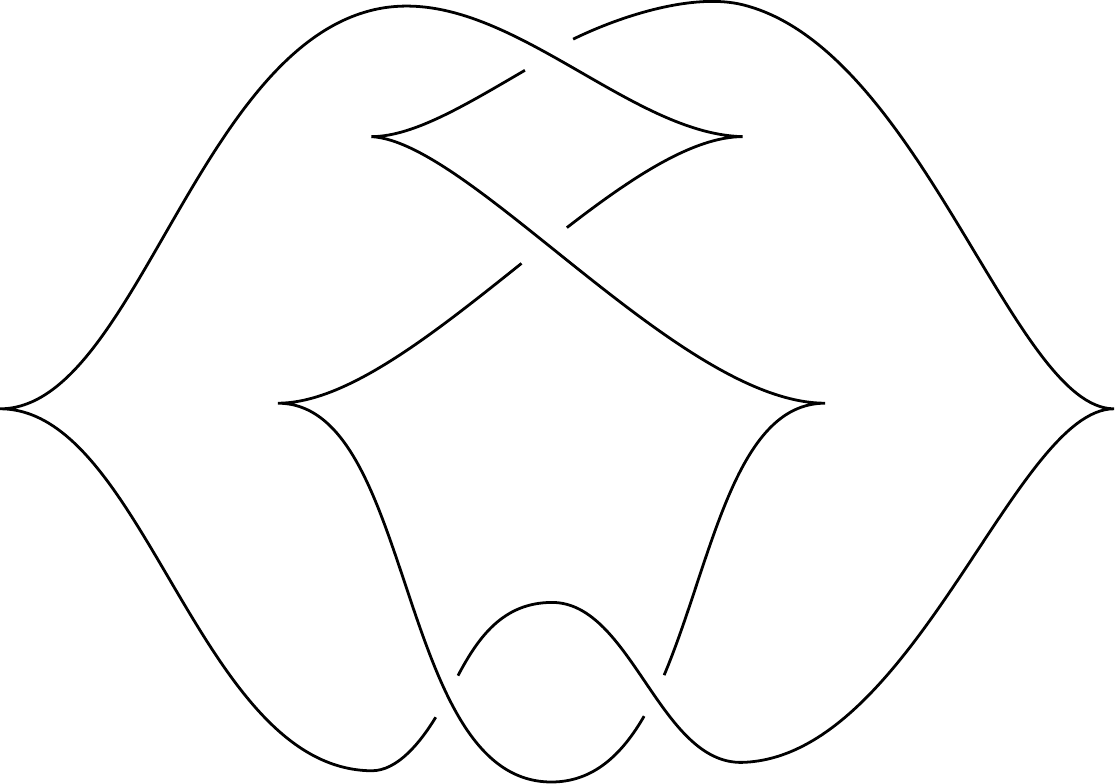}
\caption[Figure-Eight Knot.]{On the left is a smooth figure-eight knot $K$.  On the right is a Legendrian representative $L$ of $K$ with $tb(L) = \overline{tb}(K) = -3$.  We omit choices of orientation, since $K$ is amphichiral.}
\label{figure8knot}
\end{center}
\end{figure}

Let $L$ be a Legendrian figure-eight knot in $(S^3, \xi_{\rm{std}})$. Define a contact structure $\xi^-(L)$ (resp.\ $\xi^+(L)$) on $S^3 \backslash N(K)$ by gluing a negative (resp.\ positive) basic slice to the complement of $N(L) \subset (S^3, \xi_{\rm{std}})$ such that $\bd \left(S^3 \backslash N(K)\right)$ is convex with two meridional dividing curves.

\begin{prop}\label{prop:lessthan-3}
Let $L$ be a Legendrian figure-eight knot in $(S^3, \xi_{\rm{std}})$.
\begin{enumerate}
\item If $tb(L) - rot(L) = -3$ and $tb(L) < -3$, then $(S^3 \backslash N(K), \xi^+(L))$ is overtwisted.
\item If $tb(L) + rot(L) = -3$ and $tb(L) < -3$, then $(S^3 \backslash N(K), \xi^-(L))$ is overtwisted.
\item If $tb(L) \pm rot(L) < -3$, then $(S^3 \backslash N(K), \xi^{\pm}(L))$ is overtwisted.
\end{enumerate}
\end{prop}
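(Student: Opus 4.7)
The plan is to exhibit an overtwisted $T^2\times I$ inside $(S^3\backslash N(K), \xi^{\pm}(L))$ by combining the basic slice glued on during the construction of $\xi^{\pm}(L)$ with a basic slice arising from a destabilisation of $L$, arranged so that the two signs disagree. The overtwistedness criterion recorded after Theorem~\ref{hondaclassification} then forces $\xi^{\pm}(L)$ to be overtwisted.

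By Etnyre and Honda's classification, Legendrian figure-eight knots are determined up to Legendrian isotopy by $(tb, rot)$, which ranges over pairs with $tb\leq -3$ and $|rot|\leq -tb-3$; note that $tb+rot$ is always odd, since stabilisation acts by $(tb, rot)\mapsto (tb-1, rot\pm 1)$ starting from $L_{\max}$ at $(-3, 0)$. In particular, $tb(L)+rot(L)<-3$ is equivalent to $tb(L)+rot(L)\leq -5$, and analogously for $tb-rot$. An $S_-$-destabilisation of $L$ would be a Legendrian figure-eight $L'$ with $(tb(L)+1, rot(L)+1)$; its realisability in the mountain range reduces to $|rot(L)+1|\leq -tb(L)-4$, and splitting on the sign of $rot(L)+1$ and using the parity together with the standing hypothesis $tb(L)<-3$ shows this is equivalent to $tb(L)+rot(L)\leq -5$. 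Symmetrically, an $S_+$-destabilisation exists precisely when $tb(L)-rot(L)\leq -5$. When $L = S_-(L')$, the region $N(L')\backslash N(L)$ is a negative basic slice with outer boundary $\bd N(L')$ of slope $tb(L)+1$ and inner boundary $\bd N(L)$ of slope $tb(L)$; symmetrically for $S_+$.

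Now suppose $tb(L)+rot(L)<-3$. This covers the positive-sign half of Part~(3) and subsumes the hypothesis of Part~(1), since $tb-rot=-3$ together with $tb<-3$ forces $tb+rot=2tb+3\leq -5$. Pick an $S_-$-destabilisation $L'$, and form $\xi^+(L)$ by attaching a positive basic slice $B$ of outer slope $\infty$ and inner slope $tb(L)$ to $\bd N(L)$. The region $B\cup (N(L')\backslash N(L))$ is a $T^2\times I$ with boundary slopes $\infty$ and $tb(L)+1$, which are connected by a Farey edge; it is split at the intermediate convex torus $\bd N(L)$ of slope $tb(L)$ into two basic slices of opposite sign, with $B$ positive and $N(L')\backslash N(L)$ negative. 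Since $\{\infty, tb(L), tb(L)+1\}$ spans a Farey triangle, by the remark following Theorem~\ref{hondaclassification} this combined $T^2\times I$ is overtwisted, and hence so is $\xi^+(L)$. Part~(2) and the negative-sign half of Part~(3) follow by the symmetric argument using an $S_+$-destabilisation; alternatively, they can be obtained by appealing to the amphichirality of the figure-eight knot.

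The main subtlety is verifying the realisability of the destabilised knots $L'$, which amounts to the parity-sharpened inequalities above; once this case-check is in hand, the Farey-triangle structure of $\{\infty, tb(L), tb(L)+1\}$ combined with the sign-disagreement overtwistedness criterion completes the proof directly, without invoking anything beyond the results recorded in Section~\ref{sec:background}.
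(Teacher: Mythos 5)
Your proposal is correct and follows essentially the same route as the paper: realise $L$ as a stabilisation, so that the stabilisation basic slice $N(L')\backslash N(L)$ and the basic slice glued on to form $\xi^{\pm}(L)$ have opposite signs and together form a $T^2\times I$ with Farey-edge boundary slopes, which is therefore overtwisted by the discussion after Theorem~\ref{hondaclassification}. The only differences are cosmetic: you treat $\xi^+$ and $\xi^-$ symmetrically and make the mountain-range destabilisation check and the slope bookkeeping explicit, whereas the paper first reduces to $\xi^-$ via amphichirality and cites the Etnyre--Honda classification more briefly.
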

\begin{proof}
For any Legendrian knot $L$, $(S^3 \backslash N(K), \xi^-(L))$ is contactomorphic to $(S^3 \backslash N(K), \xi^+(\overline{L}))$, where $\overline{L}$ is the mirror Legendrian knot to $L$. Since the figure-eight knot is amphichiral, $\overline{L}$ is also a figure-eight knot, and $rot(\overline{L}) = -rot(L)$.  Thus, $(1)$ and $(2)$ are equivalent.  Also, if $L$ satisfies $tb(L) \pm rot(L) < -3$, then so does $\overline{L}$, so to prove the proposition, it suffices to consider $\xi^-(L)$ for $L$ satisfying the hypotheses of $(2)$ and $(3)$.

By \cite{EH:knots}, the figure-eight knot is a Legendrian simple knot ({\it ie.\ }Legendrian figure-eight knots are classified up to isotopy by their $tb$ and $rot$) with $tb(L) - rot(L) \leq -3$.  Thus, any such $L$ satisfying the hypotheses of $(2)$ or $(3)$ is a positive stabilisation of some other Legendrian knot $L'$.  Gluing a negative basic slice to the complement of $L$ to construct the contact structure $\xi^-(L)$ is the same as first gluing a positive basic slice to the complement of $L'$ to arrive at the complement of $L$, and then gluing on the negative basic slice to get $\xi^-(L)$.  These two basic slices (the positive and the negative) glue together to give one $T^2 \times I$, but since the two basic slices have opposite signs, the contact structure on this $T^2 \times I$ is overtwisted (see the discussion after Theorem~\ref{hondaclassification}).  This $T^2 \times I$ embeds into $(S^3 \backslash N(K), \xi^-(L))$, so we conclude that $(S^3 \backslash N(K), \xi^-(L))$ is overtwisted.
\end{proof}

Let $L_t$ have $tb(L_t) = t \leq -3$ and $tb(L_t) - rot(L_t) = -3$. The negative basic slice with dividing curve slopes $-3$ and $\infty$ can be divided into two negative basic slices, one with dividing curve slopes $-3$ and $t$, and one with dividing curve slopes $t$ and $\infty$.  Hence, $(S^3 \backslash N(L_t), \xi^-(L_t)) = (S^3 \backslash N(L_{-3}), \xi^-(L_{-3}))$ for all $t \leq -3$.  A similar statement holds for $\xi^+$ for $L$ satisfying $tb(L) + rot(L) = -3$.  Additionally, as in the proof of Proposition~\ref{prop:lessthan-3}, the amphichirality of the figure-eight knot gives a contactomorphism between $\xi^-(L_{-3})$ and $\xi^+(L_{-3})$.  Thus, to prove Theorem~\ref{maintheorem}, it is sufficient to show that $(S^3 \backslash N(K), \xi^-(L_{-3}))$ is overtwisted.

For the rest of this section, let $L$ denote the Legendrian figure-eight knot in $(S^3, \xi_{\rm{std}})$ with $tb(L) = -3$ (called $L_{-3}$ above).  Recall that the knot invariant $\LOSS(L)$ coming from Heegaard Floer vanishes for all Legendrian figure-eight knots, which implies that the contact invariant $EH(\xi^-(L)) = 0$ as well (see Section~\ref{sec:heegaardfloer}).

\begin{prop}\label{max tb f8}
All positive contact surgeries on $L$ are overtwisted.
\end{prop}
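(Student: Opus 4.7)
By the discussion preceding the statement, any positive contact surgery on $L$ contains $(S^3 \setminus N(K), \xi^\pm(L))$ as an open submanifold, and the amphichirality of the figure-eight gives a contactomorphism $\xi^-(L) \cong \xi^+(L)$, so it is enough to show that $(S^3 \setminus N(K), \xi^-(L))$ is overtwisted. Recall also that $EH(\xi^-(L)) = 0$. The plan is to identify a dividing set $\Gamma_\Sigma$ induced by $\xi^-(L)$ on a convex Seifert surface $\Sigma$ of $K$, classify all tight contact structures on $S^3 \setminus N(K)$ with two meridional dividing curves on the boundary that induce this same $\Gamma_\Sigma$ on $\Sigma$, and verify that each such tight structure has non-vanishing $EH$ class. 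Since $\xi^-(L)$ induces $\Gamma_\Sigma$ on $\Sigma$ but has vanishing $EH$, this will force $\xi^-(L)$ to be overtwisted.

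First I would Legendrian realise $\bd \Sigma$ as a ruling curve on $\bd N(K)$ via Theorem~\ref{LRP} and perturb $\Sigma$ to be convex using Theorem~\ref{convex generic}. Since $\bd \Sigma$ meets each of the two meridional dividing curves exactly once, $tw(\bd \Sigma, \Sigma) = -1$ and $\Gamma_\Sigma$ has exactly two boundary endpoints on $\bd \Sigma$. Giroux's criterion (Theorem~\ref{contractible}) together with the genus-one topology of $\Sigma$ reduces the possible configurations on $\Sigma$ to a short explicit list; an application of the Imbalance Principle (Theorem~\ref{imbalance principle}) to the standard neighbourhood of $L$ together with the negative basic slice comprising $\xi^-(L)$ should single out one specific dividing configuration $\Gamma_\Sigma$.

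Next, I would classify all tight contact structures on $S^3 \setminus N(K)$ having meridional dividing curves on $\bd N(K)$ and the fixed $\Gamma_\Sigma$ on $\Sigma$. Here the genus-one fibred structure of $K$ is essential: cutting along $\Sigma$ exhibits the complement as a $\Sigma \times I$ piece whose two copies of $\Sigma$ are identified by the figure-eight monodromy, and Honda's machinery (Theorems~\ref{hondaclassification} and~\ref{bypass Farey graph}) on such a thickened surface with prescribed boundary data should produce a finite list of tight candidates. I would then construct each candidate explicitly --- for example via a compatible partial open book, or a Legendrian surgery presentation adapted to the fibration --- and compute its $EH$ class in $\SFH(-(S^3 \setminus N(K)), -\Gamma_{\rm meridional})$ using the sutured Heegaard Floer tools of Section~\ref{sec:heegaardfloer}, verifying in each case that it is non-zero.

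The main obstacle will be this second classification step combined with the subsequent $EH$ computation. The bypass bookkeeping on a genus-one fibre identified by the figure-eight monodromy is intricate, and any missed tight candidate would break the contradiction; moreover, the $EH$ class of each candidate must be extracted from a concrete sutured Heegaard diagram for the figure-eight complement, which is workable but laborious. Success rests on the low genus and fibredness of the figure-eight keeping the classification manageable, exactly as foreshadowed in the introduction.
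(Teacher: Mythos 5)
Your overall strategy is the paper's: reduce to $\xi^-(L)$ via amphichirality, use $EH(\xi^-(L))=0$, normalise the dividing set on a convex Seifert surface, classify the tight structures compatible with that data, and derive a contradiction. But there is a concrete error in the normalisation step: you assert that the Imbalance Principle should ``single out one specific dividing configuration $\Gamma_\Sigma$.'' It cannot. The paper's Lemma~\ref{f8normalise} shows that, assuming tightness, one can only normalise to one of \emph{two} configurations --- either one arc together with one closed curve parallel to $\vects{0}{1}$, or a single boundary-parallel arc --- and both configurations genuinely occur in tight structures on $S^3\backslash N(K)$: the boundary-parallel-arc case is realised by $\xi_{\rm byp}$ (coming from a Legendrian approximation of the binding of the figure-eight open book), and the arc-plus-closed-curve case by the restriction of the Stein fillable structure on the torus bundle $T_\phi$. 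So any attempt to eliminate one of the two cases is doomed, and a proof that only treats one case has a hole exactly where the second tight candidate lives. The fix is to carry both configurations through the classification, as the paper does.

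Two further points where your plan diverges from what actually closes the argument. First, ``a finite list of tight candidates'' from Honda-style bookkeeping on $\Sigma\times I$ is not enough unless you can realise and test every member; the paper instead proves \emph{uniqueness} per configuration (Lemma~\ref{f8classification}) by finding two compressing discs in the genus-2 handlebody $\Sigma\times[0,1]$ with Legendrian $tb=-1$ boundary, noting their dividing sets are forced, and invoking Eliashberg's uniqueness on the remaining $B^3$. Second, the non-vanishing of $EH$ is not obtained by computing in a sutured Heegaard diagram (the step you flag as ``workable but laborious''): it is obtained by identifying the two candidates with known contact structures --- $\LOSS$ of the binding approximation is non-zero by \cite{EVV:torsion}, and $EH$ of the torus-bundle restriction maps to the non-zero class $c(\xi_0)$ of the Stein fillable $(T_\phi,\xi_0)$ under the inclusion-induced map $\SFH\to\HFhat$. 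Without these identifications (or an actual computation), the final contradiction is not established in your proposal.
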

\begin{proof}[Sketch of Proof]
Assuming $\xi$ is tight, we will use convex surfaces to show that $(S^3 \backslash N(K), \xi^-(L))$ is contactomorphic to one of two possible contact manifolds (see Lemma~\ref{f8normalise} and Lemma~\ref{f8classification}).  We will then construct these two possibilities, and show that they have non-vanishing Heegaard Floer contact class $EH$.   However, since $\LOSS(L) = 0$, we know that $EH(\xi^-(L))$ vanishes, and so we arrive at a contradiction, and $(S^3 \backslash N(K), \xi^-(L))$ is overtwisted. We are then done, by the discussion above the proposition.
\end{proof}

Given a Seifert surface $\Sigma$ for $L$, we can think of $\Sigma$ as sitting inside $S^3 \backslash N(K)$ with boundary on $\bd \left(S^3 \backslash N(K)\right)$. After perturbing $\Sigma$ to be convex, we first wish to normalise the dividing curves of $\Sigma$ in $(S^3 \backslash N(K), \xi^-(L))$.  We will use the fact that $S^3 \backslash N(K)$ is fibred over $S^1$ with fibre $\Sigma$, and the monodromy (after choosing a basis for $\Sigma$) is given by $$\phi = \left(\begin{array}{cc}2 & 1 \\ 1 & 1\end{array}\right)$$ up to twisting along the boundary of $\Sigma$; choose the representative without any boundary twisting.

\begin{figure}[htbp]
\begin{center}
\begin{overpic}[scale=.7,tics=15]{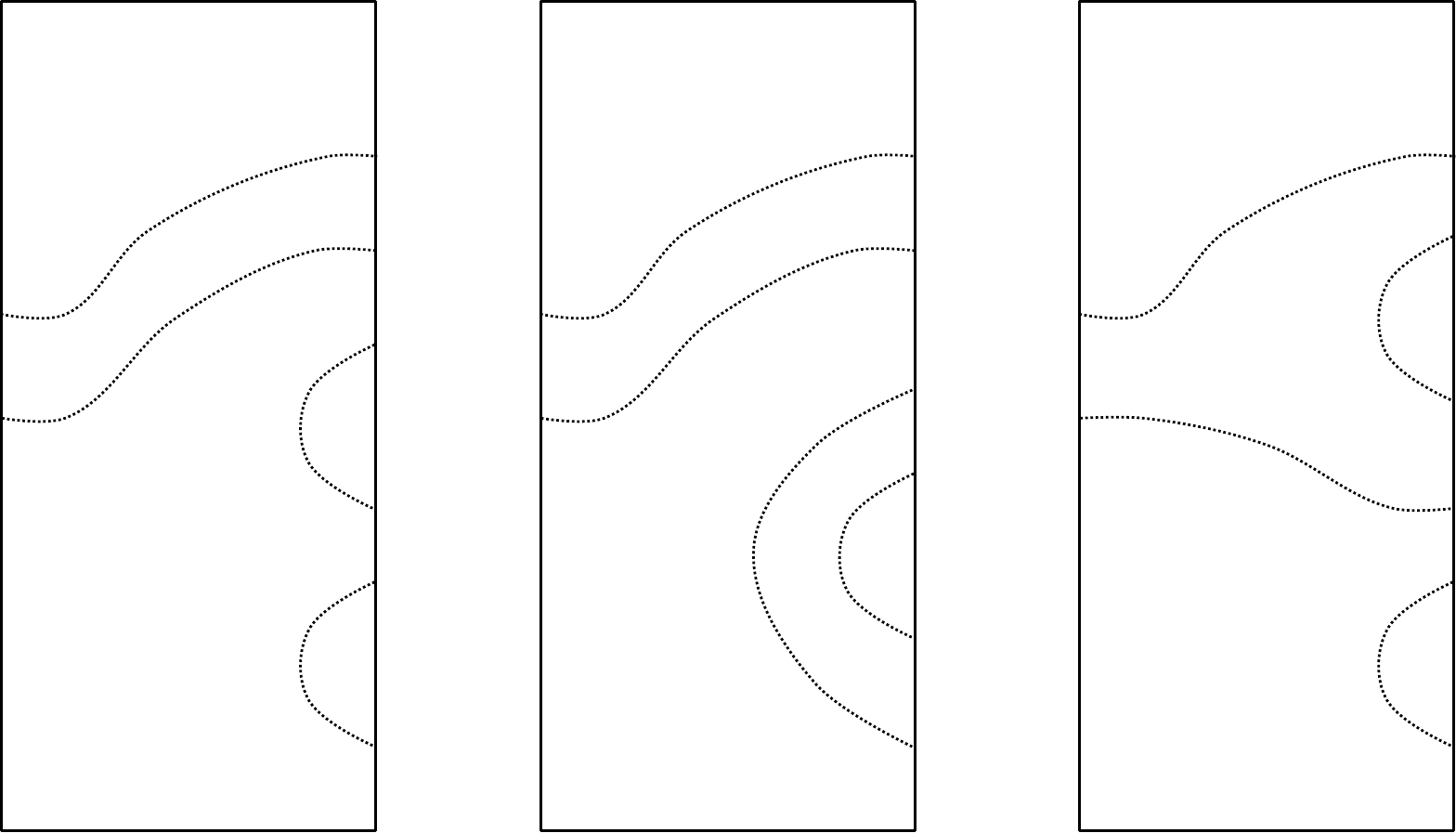}
\end{overpic}
\caption[Dividing curves extending $\Sigma$]{Possible dividing curves on the annulus $A$.  The tops are identified with the bottoms, and the left-hand side sits on $\bd \left(S^3 \backslash N(K)\right)$.}
\label{fig:dividing curves extending sigma}
\end{center}
\end{figure}

\begin{lem}\label{f8normalise} If $(S^3 \backslash N(K), \xi^-(L))$ is tight, there is an isotopic copy of $\Sigma$ in $(S^3 \backslash N(K), \xi^-(L))$ such that it is convex and the dividing curves are either
\begin{enumerate}
\item one arc and one closed curve, parallel to $\vects{0}{1}$, or
\item one boundary-parallel arc.
\end{enumerate}
\end{lem}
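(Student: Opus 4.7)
The plan is a standard convex-surface argument: perturb $\Sigma$ to be convex, use tightness plus Giroux's criterion to restrict the possible dividing sets, and iteratively simplify the dividing set by finding bypasses via the Imbalance Principle on the annulus $A$ of Figure~\ref{fig:dividing curves extending sigma}.

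First, by Theorem~\ref{convex generic}, we may perturb $\Sigma$ so that it is convex with Legendrian boundary. Since $\partial\Sigma$ is the Seifert longitude and the torus $\partial(S^3\setminus N(K))$ carries two meridional dividing curves, we have $|\partial\Sigma\cap\Gamma_{\partial(S^3\setminus N(K))}|=2$, so $\Gamma_\Sigma$ consists of exactly one arc plus $c\geq 0$ closed curves. Under the tightness assumption, Theorem~\ref{contractible} forbids any null-homotopic closed components; since disjoint essential simple closed curves on a once-punctured torus are mutually parallel, the $c$ closed components are all parallel to a single primitive class, which I will call $\vects{q}{p}$ in some chosen basis for $H_1(\Sigma)$.

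To normalize the slope and reduce $c$, I introduce the convex annulus $A$ of Figure~\ref{fig:dividing curves extending sigma}: one boundary is a Legendrian ruling curve on $\partial(S^3\setminus N(K))$, realized via Theorem~\ref{LRP}, and the other boundary lies on $\Sigma$ as a Legendrian representative of the primitive class (again via Theorem~\ref{LRP}). By Giroux's criterion applied to $A$, the dividing set $\Gamma_A$ has no contractible components, so $\Gamma_A$ is forced into one of the finitely many configurations listed in Figure~\ref{fig:dividing curves extending sigma}.

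For each admissible configuration of $\Gamma_A$, the Imbalance Principle (Theorem~\ref{imbalance principle}) produces a bypass for $\Sigma$ along $A$. The effect on $\Gamma_\Sigma$ is computed via edge-rounding (Figure~\ref{fig:rounding convex boundary}) and the bypass move of Figure~\ref{fig:bypass}: Theorem~\ref{bypass Farey graph} tells us that each bypass attachment either reduces $c$ by one or shifts the slope $\vects{q}{p}$ one step along the Farey graph toward $\vects{0}{1}$. Tightness rules out the creation of contractible dividing components at every step, so iteratively attaching bypasses brings us to $c\leq 1$ with the closed curve (if present) of slope $\vects{0}{1}$. In the case $c=0$, any non-boundary-parallel arc can be further simplified by one more bypass argument; hence the arc is boundary-parallel. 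This yields configurations (1) and (2).

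The main obstacle will be controlling the direction of the bypass-induced slope changes and the sign of the bypass coming from Theorem~\ref{imbalance principle}; these depend sensitively on the sign of the basic slice used to construct $\xi^-(L)$. Verifying that each permitted configuration of $\Gamma_A$ yields a bypass of the correct sign, so that the slope indeed moves toward $\vects{0}{1}$ rather than away from it, is where the negative-basic-slice structure of $\xi^-(L)$ plays its key role. Once this direction control is in hand, the remaining case analysis is combinatorial.
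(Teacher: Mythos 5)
Your plan leaves the crucial step unproved, and you say so yourself: the assertion that each bypass ``either reduces $c$ by one or shifts the slope one step along the Farey graph toward $\vects{0}{1}$'' is exactly what needs an argument, and deferring it as ``the main obstacle'' means the lemma is not established. As written, the mechanism you propose does not obviously work: the Imbalance Principle only yields a bypass when there actually is an imbalance of intersections with the dividing sets on the two boundary components of your annulus, and with two meridional dividing curves on $\bd(S^3\backslash N(K))$ and an arbitrary slope for the closed components of $\Gamma_\Sigma$ there is no a priori imbalance in the direction you need; moreover Theorem~\ref{bypass Farey graph} is a statement about convex tori with two dividing curves, not about the dividing set of a once-punctured-torus page, so it cannot be invoked to say the slope moves ``toward $\vects{0}{1}$.'' You also never explain why $\vects{0}{1}$ should be the distinguished slope at all, and you have misread the role of the annulus $A$ of Figure~\ref{fig:dividing curves extending sigma}: in the paper it is the extension annulus inside the glued basic slice, connecting $\bd\Sigma\subset\bd N(L)$ to the new boundary $\bd(S^3\backslash N(K))$, not a vertical annulus from a ruling curve on the boundary torus to a curve on $\Sigma$.

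The paper's proof rests on two inputs that are absent from your proposal. First, it does not start from a generic convex perturbation with an unknown dividing set; it starts from Etnyre--Honda's classification, which gives a convex Seifert surface in the complement of the $tb=-3$ Legendrian figure-eight whose dividing set is three arcs of slopes $\vects{0}{1}$, $\vects{1}{1}$, $\vects{1}{2}$. Gluing the annulus $A$ across the negative basic slice (and discarding, by Theorem~\ref{contractible}, the configurations that would create contractible curves) then leaves only finitely many possibilities: one boundary-parallel arc, or one arc plus one closed curve of slope $\vects{0}{1}$, $\vects{1}{1}$, or $\vects{1}{2}$, up to holonomy which is removed as in Honda. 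Second, the normalisation of the slope uses the fibration of the figure-eight complement: swinging $\Sigma$ once around the fibration changes the dividing slope by the monodromy $\phi^{\pm1}$, so $\vects{1}{1}$ is carried to $\phi^{-1}\vects{1}{1}=\vects{0}{1}$, and for $\vects{1}{2}$ one applies the Imbalance Principle to the annulus $\gamma\times[0,1]$ between $\Sigma\times\{0\}$ and $\Sigma\times\{1\}\,$ (with $\gamma$ a $\vects{5}{4}$-curve, giving $12$ versus $2$ intersections) to obtain a bypass moving the slope to $\vects{1}{1}$, and then swings again. Without the Etnyre--Honda input and the monodromy trick, your argument has no control over either the number of closed curves or their slope, so the gap is essential rather than a routine case check.
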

\begin{proof}
During this proof, we will perturb $\Sigma$ and swing it around the fibration to get new surfaces isotopic to $\Sigma$; we will call each new copy $\Sigma$.

Etnyre and Honda showed in \cite{EH:knots} that there exists a convex copy of $\Sigma$ in the complement of $N(L)$ with dividing curves consisting of three arcs, parallel to $\vects{0}{1}$, $\vects{1}{1}$, and $\vects{1}{2}$.

After gluing on a negative basic slice to get $(S^3 \backslash N(K), \xi^-(L))$, we extend $\Sigma$ to the new boundary by gluing on an annulus $A$ whose dividing curves are of one of the forms given in Figure~\ref{fig:dividing curves extending sigma}, a translate of one of those forms ({\it ie.\ }the right-hand side endpoints are shifted up/down in the $S^1$-direction from what is shown in the figure), or the image of one of those forms in a power of a Dehn twist along the core of the annulus.  Note that we have already excluded from our list of possibilities the cases where the dividing curves on $A$ trace a boundary-parallel curve along $\bd \left(S^3 \backslash N(K)\right)$.  In these cases, the dividing curves on $\Sigma$ would consist of a boundary-parallel curve and a contractible curve.  Since we are assuming that $(S^3 \backslash N(K), \xi^-(L))$ is tight, these cases would contradict Theorem~\ref{contractible}.

In any of the remaining cases, the resulting dividing curves on $\Sigma \cup A$ (which we also call $\Sigma$) consist either of a single boundary-parallel arc or one arc and one closed curve, parallel to one of $\vects{0}{1}$, $\vects{1}{1}$, or $\vects{1}{2}$, possibly with some boundary twisting ({\it ie.\ }holonomy of the dividing curves along the annulus $A$).  This holonomy can be removed, as in Honda's classification of tight contact structures on basic slices, see \cite[Proof of Proposition~4.7]{Honda:classification1}.  In the second case, it remains to show that we can remove boundary twisting, and if the dividing curves are parallel to $\vects{1}{1}$ or $\vects{1}{2}$, we can find an isotopic copy of $\Sigma$ such that the dividing curves are parallel to $\vects{0}{1}$.

We can can swing $\Sigma$ around the $S^1$-direction of the fibration $S^3 \backslash N(K) \to S^1$ to find an isotopic copy of $\Sigma$ with dividing curves changed by $\phi$ or $\phi^{-1}$.  Since $\phi^{-1}\vects{1}{1} = \vects{0}{1}$, we can go from curves parallel to $\vects{1}{1}$ to curves parallel to $\vects{0}{1}$.

Given $\Sigma$ with dividing curves consisting of an arc and a curve parallel to $\vects{1}{2}$, swinging $\Sigma$ around the $S^1$-direction of the fibration gives an embedded $\Sigma \times [0,1]$ such that $\Sigma \times \{0\}$ is convex with dividing curves parallel to $\vects{1}{2}$ and $\Sigma \times \{1\}$ is convex with dividing curves parallel to $\phi\vects{1}{2} = \vects{4}{3}$.  Let $\gamma$ be a closed $\vects{5}{4}$-curve on $\Sigma$, and consider the annulus $A' = \gamma \times [0, 1] \subset \Sigma \times [0, 1]$.  The boundary of $A'$ intersects the dividing set of $\Sigma \times \{0\}$ twelve times, while it intersects the dividing set of $\Sigma \times \{1\}$ only twice.  Thus, after making $A'$ convex with Legendrian boundary, Theorem~\ref{imbalance principle} guarantees a bypass for $\Sigma \times \{0\}$ along $A'$.  This changes the slope of the dividing curves to $\vects{1}{1}$, by Theorem~\ref{bypass Farey graph}.  Then as above, we can find an isotopic copy of $\Sigma$ whose dividing curves are parallel to $\phi^{-1}\vects{1}{1} = \vects{0}{1}$, as desired.
\end{proof}

\begin{lem}\label{f8classification} Up to contactomorphism, there are at most two tight contact structures on $S^3 \backslash N(K)$ inducing a convex boundary with two meridional dividing curves and such that there exists a copy of $\Sigma$ with dividing curves of one of the two forms described in Lemma~\ref{f8normalise}. \end{lem}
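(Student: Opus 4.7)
The plan is to cut $S^3\backslash N(K)$ open along the convex Seifert surface $\Sigma$ and reduce the classification to the standard problem of counting tight contact structures on a handlebody with prescribed convex boundary. Since $K$ is fibered with fiber $\Sigma$, cutting along $\Sigma$ produces a handlebody $H \cong \Sigma \times I$ of genus two. After edge-rounding at the corners $\partial \Sigma \times \{0,1\}$, the surface $\partial H$ becomes a closed convex genus-two surface whose dividing set is determined by (i)~two copies of the specified dividing configuration on $\Sigma$ (on $\Sigma\times\{0\}$ and $\Sigma\times\{1\}$) and (ii)~the two arcs on the annulus $A = \partial N(K) \setminus \partial \Sigma$ coming from the meridional dividing curves on $\partial N(K)$. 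I would record $\Gamma_{\partial H}$ explicitly for each of the two configurations allowed by Lemma~\ref{f8normalise}.

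Next, I would choose two disjoint essential arcs $\alpha_1,\alpha_2 \subset \Sigma$ whose union cuts $\Sigma$ into a disk, arranging them so that they meet the dividing set on $\Sigma$ minimally: disjoint from the closed $\vects{0}{1}$-curve in case~(1), and disjoint from the bigon cut off by the boundary-parallel arc in case~(2). The disks $D_i = \alpha_i \times I$ form a compressing disk system for $H$, and cutting $H$ along $D_1 \cup D_2$ yields a $3$-ball. By Theorem~\ref{convex generic} and the Legendrian realisation principle (Theorem~\ref{LRP}), the $D_i$ may be made convex with Legendrian boundary, and each dividing set $\Gamma_{D_i}$ is then a collection of disjoint properly embedded arcs whose endpoints are the finitely many points of $\partial D_i \cap \Gamma_{\partial H}$. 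Once $\Gamma_{D_1}$ and $\Gamma_{D_2}$ are fixed, the contact structure on the resulting ball is determined by Eliashberg's uniqueness theorem, so the classification reduces to enumerating admissible non-crossing arc configurations on the two disks.

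The final step is to rule out configurations violating Giroux's criterion (Theorem~\ref{contractible}): any boundary-parallel arc on $D_i$ bounding a region containing no other dividing curves, or any choice that creates a contractible closed curve in $\Gamma_{\partial H}$ upon re-gluing, forces overtwistedness and is discarded. I would then identify the remaining configurations up to isotopies of the $\alpha_i$ in $\Sigma$ and up to contactomorphisms of $H$ that preserve $\Gamma_{\partial H}$. With the low boundary-intersection counts set up above, the combinatorics of non-crossing arc families on the $D_i$ is small, and I expect at most two classes to survive across the two configurations of Lemma~\ref{f8normalise}. The main obstacle will be this last bookkeeping step: recognising when two nominally distinct dividing patterns on the cutting disks produce contactomorphic contact structures, especially via isotopies that move the cutting disks inside $H$, since these equivalences are precisely what collapse the naive combinatorial count down to the bound of two claimed in the statement.
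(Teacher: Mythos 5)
Your overall strategy -- cut along $\Sigma$, pass to the genus-two handlebody $\Sigma\times[0,1]$, choose a compressing-disc system, and finish with Eliashberg's uniqueness on the ball -- is the same decomposition the paper uses, but the proposal has a genuine gap at exactly the step you flag as the ``main obstacle.'' You leave the dividing sets $\Gamma_{D_i}$ on the compressing discs as an unconstrained family of non-crossing arc configurations and then hope that Giroux's criterion plus unspecified contactomorphisms collapse the count to two; no argument is given for why distinct disc configurations yield contactomorphic (or overtwisted) structures, and in general they need not. The paper avoids this enumeration entirely by choosing the compressing discs so that each Legendrian boundary $\bd D_i$ meets the dividing set of the rounded boundary in exactly two points (equivalently $tb(\bd D_i)=-1$): then by Theorem~\ref{LRP} and Theorem~\ref{contractible} the dividing set on each $D_i$ is forced to be a single arc, so there is no choice at all, and uniqueness for each of the two boundary configurations follows immediately. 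The geometric input making this possible is the explicit tracking of how $\bd D_i$ crosses the region $\bd\Sigma\times[0,1]$ (it crosses no dividing curves there but switches sides), which your ``minimal intersection with $\Gamma_\Sigma$'' condition does not deliver; without the count $|\bd D_i\cap\Gamma_{\bd H}|=2$ your reduction to the ball does not pin down the contact structure.

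Two further inaccuracies in your setup would also need repair. First, the dividing set on $\Sigma\times\{1\}$ is not a second copy of the configuration on $\Sigma\times\{0\}$: it is its image $\phi(\Gamma)$ under the monodromy, which matters in case (1), where $\Gamma$ is parallel to $\vects{0}{1}$ but $\phi(\Gamma)$ is parallel to $\vects{1}{1}$; your description of $\Gamma_{\bd H}$, and hence any intersection counts with $\bd D_i$, would be wrong as stated. Second, the germ of the contact structure along $\Sigma$ depends not only on $\Gamma$ but also on the signs of the regions $\Sigma_\pm$; to get a bound of two (rather than four) up to contactomorphism one must identify the two sign choices, which the paper does by noting $\phi=(-\mathrm{id})\circ\phi\circ(-\mathrm{id})^{-1}$ and applying $-\mathrm{id}$ to $\Sigma$. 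Your proposal does not address this, so even after fixing the disc analysis the stated bound would not yet follow.
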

\begin{proof}
We will show that for each of the two possible normalisations in Lemma~\ref{f8normalise}, there is a unique tight contact structure up to contactomorphism.

First, we claim we can switch the signs of the regions $\Sigma_{\pm}$ of $\Sigma$.   Indeed, since $\phi = (-id)\circ\phi\circ(-id)^{-1}$, we can apply $-id$ to $\Sigma$, which keeps the same dividing curves, but switches the signs of the regions.

Given $\Sigma$ with fixed dividing curves $\Gamma$ and signs of the regions $\Sigma \backslash \Gamma$, this uniquely determines a tight vertically-invariant contact structure on some neighbourhood $N(\Sigma)$ of $\Sigma$.  We will show that there exists a unique tight contact structure on $M \backslash N(\Sigma)$, for each of the two possible choices of $\Gamma$ on $\Sigma$.  Then, given two tight contact structures on $M$ inducing the same dividing curves on $\Sigma$ with the same signs, a contactomorphism of $N(\Sigma)$ can be extended to a contactomorphism on all of $M$.

\begin{figure}[htbp]
\begin{center}
\begin{overpic}[scale=1,tics=20]{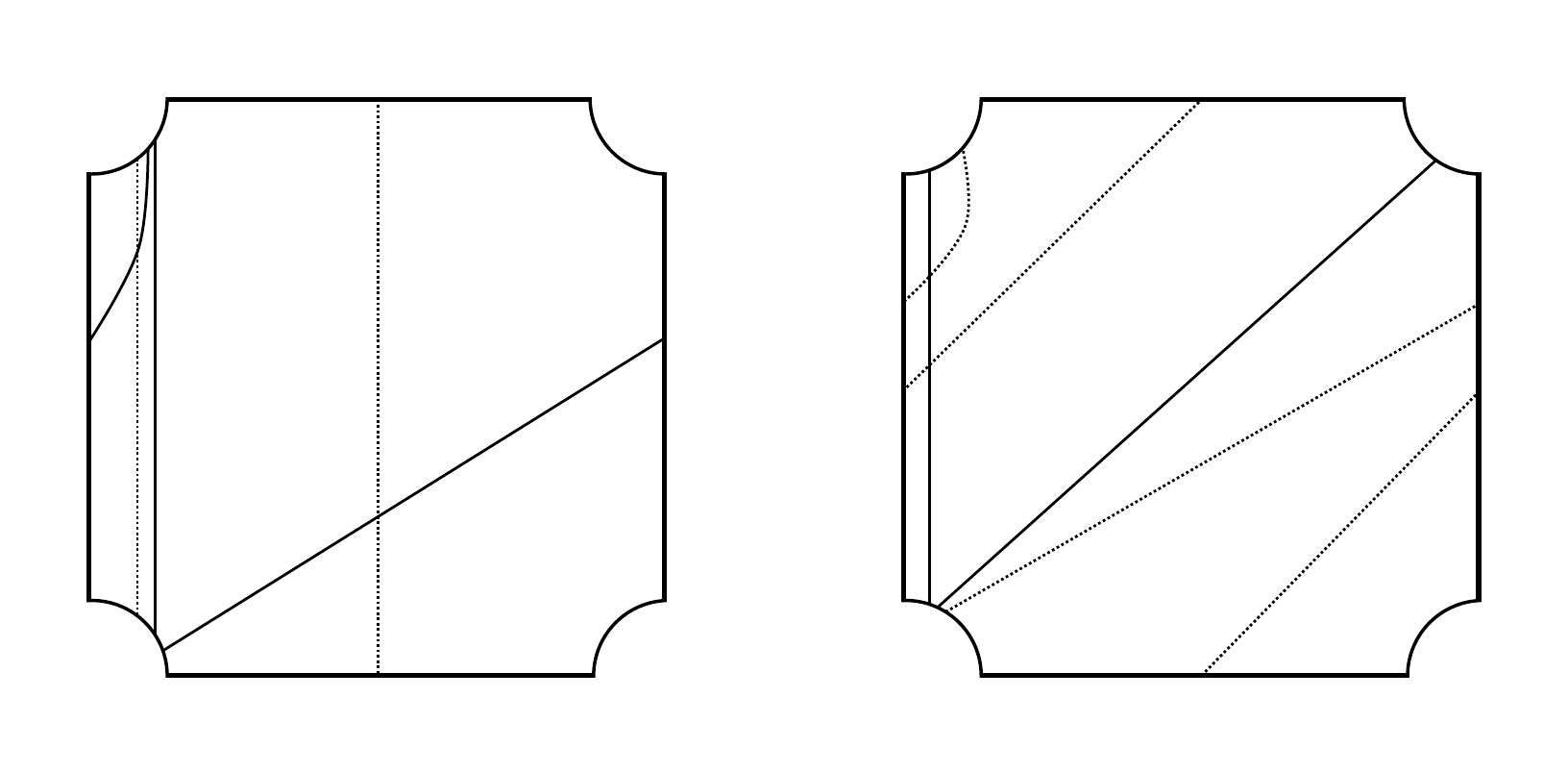}
\put(100,208){$\Sigma \times \{0\}$}
\put (340,208){$\Sigma \times \{1\}$}
\end{overpic}
\caption[$\Sigma\times\{0, 1\}$ with compressing discs, case (1)]{In each picture, the top and bottom are identified, as are the left and right sides. The dotted lines represent the dividing curves.  The solid lines represent the intersection of the boundaries $\bd D_i$ of the compressing discs with $\Sigma \times \{0, 1\}$.}
\label{fig:sigma01 with compressing discs}
\end{center}
\end{figure}

Observe that $M \backslash N(\Sigma) \cong \Sigma \times [0, 1]$ is a genus 2 handlebody. The contact structure has a convex boundary obtained by rounding the edges of $\Sigma \times \{i\}$ and $\bd \Sigma\times [0,1]$, where the dividing curves on $\Sigma \times \{0\}$ are $\Gamma$, those on $\Sigma \times \{1\}$ are $\phi(\Gamma)$, and those on $\bd \Sigma \times [0, 1]$ are two copies of $\{pt\} \times [0, 1]$.  We will look for compressing discs $D_1$ and $D_2$ such that their boundaries are Legendrian with $tb = -1$.  After making the compressing discs convex, there will be a unique choice of dividing curves for $D_i$, since their dividing curves intersect the boundary of the disc at exactly two points, by Theorem~\ref{LRP}, and there can be no contractible dividing curves, by Theorem~\ref{contractible}.  This allows us to uniquely define the tight contact structure in a neighbourhood of $\bd \left(M\backslash N(\Sigma)\right)\cup D_1 \cup D_2$.  The complement of this neighbourhood is diffeomorphic to $B^3$, and by \cite{Eliashberg:threeball}, we can uniquely extend the tight contact structure over $B^3$.

(1) {\it $\Gamma$ has one arc and one closed curve parallel to $\vects{0}{1}$:} The dividing curves on $\Sigma \times \{0, 1\}$ are shown as dotted lines in Figure~\ref{fig:sigma01 with compressing discs}.  The compressing discs are shown as solid lines.  Figure~\ref{fig:sigma01 boundary} shows the dividing curves in $\bd \Sigma \times [0, 1]$.  As the curves $\bd D_i$ pass from $\Sigma \times \{0\}$ to $\Sigma\times\{1\}$ through the region $\bd \Sigma \times [0, 1]$, they do not intersect any dividing curves, but they do switch which side of the dividing curves they are on.  Thus $\bd D_i$ intersects the dividing curves exactly twice for each $i = 0, 1$, as required.

\begin{figure}[htbp]
\begin{center}
\begin{overpic}[scale=1,tics=20]{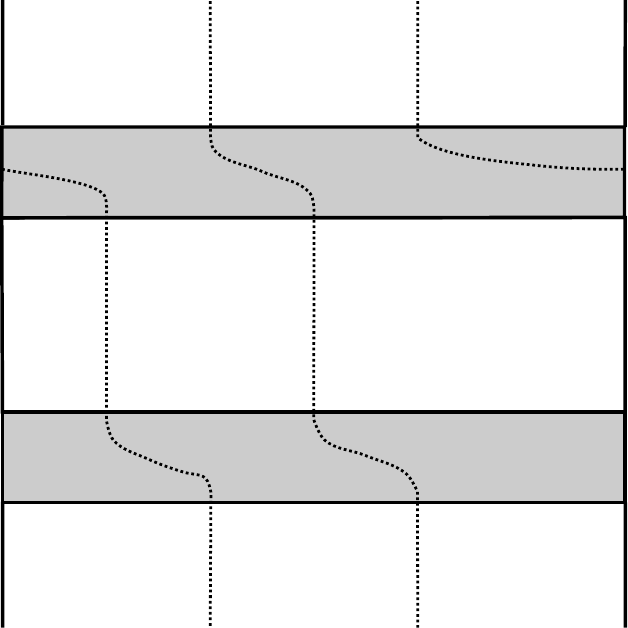}
\put(185, 160){$\Sigma \times \{0\}$}
\put(185,10){$\Sigma \times \{1\}$}
\put(185,85){$\bd\Sigma \times [0, 1]$}
\end{overpic}
\caption[$\bd\Sigma\times\lbrack0,1\rbrack$ with dividing curves]{The left and right sides are identified in this picture. The dotted lines represent the dividing curves.  The annulus in the middle is the region $\bd\Sigma \times [0, 1]$, and the darker regions above and below are interpolating regions representing how the dividing curves get connected while smoothing the boundary of $M \backslash N(\Sigma)$.}
\label{fig:sigma01 boundary}
\end{center}
\end{figure}

\begin{figure}[htbp]
\begin{center}
\begin{overpic}[scale=1,tics=20]{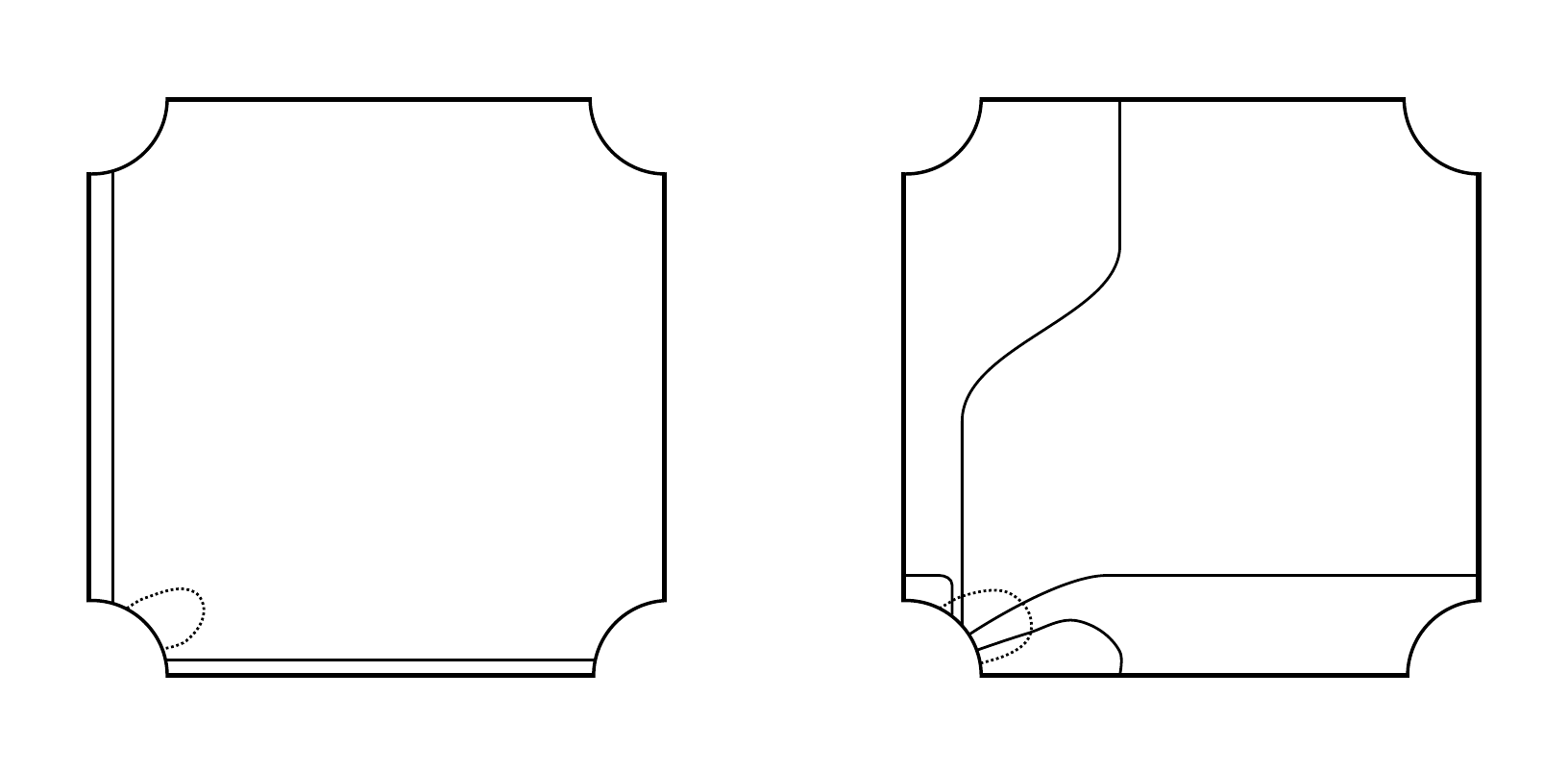}
\put(100,208){$\Sigma \times \{0\}$}
\put (340,208){$\Sigma \times \{1\}$}
\end{overpic}
\caption[$\Sigma\times\{0, 1\}$ with compressing discs, case (2)]{In each picture, the top and bottom are identified, as are the left and right sides. The dotted lines represent the dividing curves.  The solid lines represent the intersection of the boundaries $\bd D_i$ of the compressing discs with $\Sigma \times \{0, 1\}$.}
\label{fig:sigma01 bypass}
\end{center}
\end{figure}

(2) {\it $\Gamma$ has one boundary-parallel arc:} In this case, the monodromy $\phi$ does not change the dividing curves.  The dividing curves in a neighbourhood of $\bd \Sigma \times [0, 1]$ behave again as in Figure~\ref{fig:sigma01 boundary}, and the boundaries of $D_i$ intersect $\Sigma \times \{0, 1\}$ as in Figure~\ref{fig:sigma01 bypass}.  Thus $\bd D_i$ intersects the dividing curves exactly twice for each $i = 0, 1$, as required.
\end{proof}

We now exhibit both of these contact structures, and show that in each case, the Heegaard Floer contact class is non-vanishing.

\subsection*{Bypass Along $\Sigma$} Consider the open book for $S^3$ given by the figure-eight knot.  The supported contact structure on $S^3$ is overtwisted, but it was shown in \cite{EVV:torsion} that $\LOSS(L')$ is non-vanishing, where $L'$ is any Legendrian approximation of the binding of the open book.  After gluing a negative basic slice to the complement of $L'$, we arrive at $S^3 \backslash N(K)$ with a contact structure $\xi_{\rm{byp}}$ with non-vanishing Heegaard Floer contact class (since $EH(\xi_{\rm{byp}}) = \LOSS(L') \neq 0$). It it shown in \cite{EVV:torsion} that in $(S^3 \backslash N(K),\xi_{\rm{byp}})$, there exists an isotopic copy of a page of the open book (a copy of $\Sigma$) with dividing curves consisting of one boundary-parallel arc. We have thus exhibited the unique tight contact structure on $S^3 \backslash N(K)$ such that there exists a convex copy of $\Sigma$ with one boundary-parallel dividing curve arc, and it has non-vanishing $EH$ invariant.

\subsection*{No Bypass Along $\Sigma$} Consider the tight contact structure $\xi_0$ on the torus bundle $T_{\phi}$ over $S^1$ with monodromy $\phi$ that has no Giroux torsion, {\it ie.\ }created by taking a basic slice $T^2 \times [0,1]$ with dividing curves on the boundary of slopes $s_0 = -\infty$ and $s_1 = -2$ and gluing $T\times\{1\}$ to $T \times \{0\}$ via $\phi$. The contact manifold $(T_{\phi}, \xi_0)$ was shown to be Stein fillable by van Horn-Morris \cite{VHM}, so in particular, the Heegaard Floer contact class is non-vanishing, by \cite{OS:contact}.

Thinking of $T_{\phi}$ as an $S^1$-bundle over $T^2$, we pick a regular fibre and realise it as a Legendrian knot $L''$.  We claim that we can do this in a manner such that the contact planes do not twist along $L''$ when measured with respect to the fibration structure; indeed, the diffeomorphism $\phi$ is isotopic to one which fixes the neighbourhood of a point $p$ in $T^2$.  Then the knot $L'' = p \times [0, 1] \subset (T_{\phi}, \xi_0)$ is Legendrian.  By the classification in \cite[Table 2]{Honda:classification2} of tight contact structures on $T_{\phi}$, we see that in the minimally twisting one ({\it ie.\ }the one with no Giroux torsion), the contact planes twist less than an angle $\pi$ as they traverse the $S^1$-direction of the fibration.  Thus, the dividing curves on the boundary of $N(L'')$ cannot twist around the meridional direction, and so must give the product framing for the knot.  Note that $T_{\phi}\backslash N(L'')$ can be naturally identified with $S^3 \backslash N(K)$, and under this identification, the framing gives the meridional slope.

If we pick $p$ to be a point on the dividing curves of $T^2 \times \{0\} \subset T_{\phi}$, then the dividing curves on $\Sigma \subset (S^3 \backslash N(K),\xi_0|_{S^3 \backslash N(K)})$ consist of one arc and one curve parallel to $\vects{0}{1}$.  Since this embeds into $(T_{\phi}, \xi_0)$, there is a map sending $EH(\xi_0|_{S^3 \backslash N(K)})$ to $c(\xi_0)$, where the latter is non-vanishing.  This means that $EH(\xi_0|_{S^3 \backslash N(K)})$ is also non-vanishing.

\begin{remark}
Although unneeded for our proof, we can show that $\xi_{\rm{byp}}$ is not contactomorphic to $\xi_0|_{S^3 \backslash N(K)}$.  Indeed, if they were contactomorphic, then $(T_{\phi}, \xi_0)$ would be the result of some positive contact surgery on $L'$.  However, all positive contact surgeries on $L'$ are overtwisted, by \cite{Wand}.
\end{remark}

\begin{proof}[Proof of Theorem~\ref{maintheorem}] By Proposition~\ref{prop:lessthan-3} and the discussion below it, it suffices to consider the case $tb(L) = -3$.  The result of any positive contact surgery on $L$ has a contact submanifold that can be identified with $(S^3 \backslash N(K), \xi^-(L))$ or $(S^3 \backslash N(K), \xi^+(L))$.  The Heegaard Floer contact class $EH(\xi^\pm(L))$ vanishes, as $\LOSS(L) = 0$ and $L$ is amphichiral.  Since if tight, $\xi^-(L)$ and $\xi^+(L)$ would have to be contactomorphic to one of the contact structures on $S^3 \backslash N(K)$ constructed above with non-vanishing $EH$ class, we conclude that $\xi^-(L)$ and $\xi^+(L)$ are overtwisted.  Thus, any manifold which contains them as a contact submanifold must also be overtwisted.
\end{proof}

\bibliography{references}{}
\bibliographystyle{plain}
\end{document}